  \gdef\sformat{"Date: 
\endgroup
\directlua{
 local cmd="git show -s --format='"..\sformat.."'"
 local r=io.popen(cmd):read("*a")
 if (r) then
      tex.print("\string\\def\string\\COMMIT{"..r.."}") 
 end
 }
\or
\relax\fi
\makeatother
\ifdefined\COMMIT
        \usepackage{background}
        \backgroundsetup{%
         pages=all, placement=bottom,angle=0,scale=2,%
         vshift=20pt,%
         contents={\COMMIT}}
\fi
\else
\fi

\newcommand{\SL}{\mathrm{SL}}
\newcommand{\wt}{\mathrm{wt}\,}
\newcommand{\FF}{\mathbb F}
\newcommand{\QQ}{\mathbb Q}
\newcommand{\RR}{\mathbb R}
\newcommand{\SSS}{\mathbb S}
\newcommand{\KK}{\mathbb K}
\newcommand{\PP}{\mathbb P}
\newcommand{\fq}{\mathfrak q}
\newcommand{\fb}{\mathfrak b}
\newcommand{\fC}{\mathfrak C}
\newcommand{\fR}{\mathfrak R}
\newcommand{\cD}{\mathcal D}
\newcommand{\cV}{\mathcal V}
\newcommand{\cS}{\mathcal S}
\newcommand{\cQ}{\mathcal Q}
\newcommand{\cL}{\mathcal L}
\newcommand{\cI}{\mathcal I}
\newcommand{\cP}{\mathcal P}
\newcommand{\GG}{\mathbb G}
\newcommand{\DD}{\mathbb D}
\newcommand{\VV}{\mathbb V}
\newcommand{\LL}{\mathbb L}
\newcommand{\cC}{\mathcal C}
\newcommand{\cF}{\mathcal F}
\newcommand{\Rad}{\mathrm{Rad}\,}
\newcommand{\fa}{\mathfrak a}
\newcommand{\fx}{\mathfrak x}
\newcommand{\fv}{\mathfrak v}
\newcommand{\fp}{\mathfrak{p}}
\newcommand{\Cp}{p}
\newcommand{\Cq}{q}
\newcommand{\PG}{\mathrm{PG}}
\newcommand{\PGL}{\mathrm{PGL}}
\newcommand{\bee}[1]{\mathbf{e}_{#1}}
\newcommand{\GL}{\mathrm{GL}}
\newcommand{\fH}{\mathfrak H}
\newcommand{\cod}{\operatorname{codim}}
\newcommand{\diam}{\operatorname{diam}}
\newcommand{\Hom}{\mathrm{Hom}}
\newcommand{\Img}{\mathrm{Img}}
\newcommand{\Aut}{\mathrm{Aut}\,}
\newcommand{\Tr}{\mathrm{Tr}\,}
\newcommand{\rank}{\mathrm{rank}\,}
\newcommand{\cha}{\mathrm{char}\,}
\newcommand{\llbracket}{[\hspace{-0.5mm}[}
\newcommand{\rrbracket}{]\hspace{-0.5mm}]}
\newcommand{\Der}{\mathrm{Der}}
\newcommand{\ndiv}{\not\hspace{-0.5mm}|~}
\newcommand{\ddiv}{|}

\newtheorem{theorem}{Theorem}[section]
\newtheorem{lemma}[theorem]{Lemma}
\newtheorem{corollary}[theorem]{Corollary}
\newtheorem{prop}[theorem]{Proposition}

\theoremstyle{definition}

\newcommand{\cM}{\mathcal M}
\newcommand{\ovcM}{\overline{\mathcal M}}
\newcommand{\cX}{\mathcal X}
\author{I.~Cardinali, L.~Giuzzi, A.~Pasini}
\title{On the $1$-cohomology of $\SL(n,\KK)$ on the dual
  of its adjoint module}
\begin{document}
\maketitle
\begin{abstract}
Given a field $\KK$, for any $n\geq 3$ the first cohomology group $H^1(G_n,A^*_n)$ of the special linear group $G_n = \SL(n,\KK)$ over the dual $A^*_n$ of its adjoint module $A_n$ is isomorphic to the space $\Der(\KK)$ of the derivations of $\KK$, except possibly when $|\KK| \in \{2, 4\}$ and $n$ is even. This fact is mentioned by S. Smith and H. V\"{o}lklein in their paper {\em A geometric presentation for the adjont module of $\SL_3(k)$} (J. Algebra 127 (1989), 127--138). They claim that when $|\KK| > 9$ this fact follows from the main result of V\"{o}lklein's paper {\em The 1-cohomology of the adjoint module of a Chevalley group} (Forum Math. 1 (1989), 1--13), but say nothing that can help the reader to deduce it from that result. When $|\KK| \leq 9$ they obtain the isomorphism $H^1(G_n,A^*_n) \cong \Der(\KK)$ by means of other results from homological algebra, which however miss the case $|\KK| \in\{2, 4\}$ with $n $ even. In the present paper we shall provide a straightforward proof of the isomorphism $H^1(G_n,A^*_n) \cong \Der(\KK)$ under the hypothesis $n > 3$. Our proof also covers the above mentioned missing case.

\end{abstract}

\section{Introduction}

Let $\KK$ be a field and $3\leq n<\infty$. From now on we write $G_n$ for $\SL(n,\KK)$ and $\cM_n =\cM_n(\KK)$ for the space of all $n\times n$ matrices with entries in $\KK$. Regarding the elements of $G_n$ as non-singular matrices, the group $G_n$ acts as follows on $\cM_n$:
\begin{equation}\label{adj action}
\forall x\in \cM_n, ~\forall g\in G:\quad x\cdot g~:=~g^{-1}xg.
\end{equation}
This is the so-called {\em adjoint action} of $G_n$ on $\cM_n$. Clearly, $G_n$ preserves traces, explicitly: $\Tr(x\cdot g) = \Tr(x)$ for every $x\in \cM_n$ and every $g\in G_n$. In particular, $G_n$ stabilizes the subspace $A_n := \{ x\in\cM_{n} \colon \mathrm{Tr}(x)=0 \}$ of $\cM_n$ formed by the traceless matrices of $\cM_n$. So, $A_n$ is a $G_n$-module, called the {\em adjoint module} for $G_n$. Formula \eqref{adj action} (but with $x\in A_n$) describes the action of $G_n$ on $A_n$. Accordingly, $G_n$ acts as follows on the dual $A^*_n$ of $A_n$:
\[\forall \alpha\in A_n^*, ~\forall g\in G_n:\quad \alpha\cdot g~: ~ a\in A_n ~ \rightarrow \alpha(gag^{-1}).\]
Turning back to $\cM_n$,  let $\cI_n = \langle I_n\rangle$ be the $1$-dimensional subspace of $\cM_n$ generated by the identity matrix $I_n$, namely the subspace formed by scalar matrices. Obviously $G_n$ stabilizes $\cI_n$. Hence the quotient $\cM_n/\cI_n$ inherits a $G_n$-module structure from $\cM_n$, with $G_n$ acting as follows on it:
\[\forall (x+\cI_n)\in \cM_n/\cI_n,\forall g\in G:\quad (x+\cI_n)\cdot g~ =~g^{-1}xg + \cI_n.\]

\subsection{The isomorphism $A_n^* \cong \cM_n/\cI_n$}

As Smith and V\"{o}lklein remark in \cite{SV}, the $G_n$-modules $A^*_n$ and $\cM_n/\cI_n$ are isomorphic (as modules). Since this isomorphism is crucial for what we are going to do in this paper, it is worth to spend a few words to explain how to construct it.

Let $\tau: \cM_n/\cI_n \times A_n\rightarrow \KK$ be the bilinear mapping defined as follows:
\[\forall [x]\in \cM_n/\cI_n, ~\forall a\in A_n: ~~~ \tau([x], a) ~ = ~ \Tr(xa), ~ \text{ for } x\in [x],\]
where $xa$ is the usual row-times-column product of $x$ and $a$. Since for every $t\in \KK$ we have $\Tr((x+tI_n)a) = \Tr(xa) + t\Tr(a) = \Tr(xa)$ (because $\Tr(a) = 0$, as $a\in A_n$), this definition does not depend on the particular choice of the representative $x$ of the coset $[x] = x+\cI_n$. So $\tau$ is well defined. It is not difficult to prove also that $\tau$ is non-degenerate, namely both its radicals
\[\{[x]\in \cM_n/\cI_n~:~ \tau([x], a) = 0,~\forall a\in A_n\} \]
and
\[ \{a\in A_n~:~ \tau([x], a) = 0,~\forall [x]\in \cM_n/\cI_n\}\]
are null. Hence for every non-zero $[x]\in\cM_n/\cI_n$ the mapping $\alpha_{[x]}:A_n\rightarrow \KK$ which maps every $a\in A_n$ onto $\tau([x], a)$ is a non-zero linear functional of $A_n$ and every linear functional $\alpha\in A_n^*$ can be obtained in this way for exactly one element $[x]_{\alpha}$ of $\cM_n/\cI_n$. So, $A_n^* \cong \cM_n/\cI_n$.

The isomorphism $A_n^* \cong \cM_n/\cI_n$, which we have now established as an isomorphism of vector spaces, is actually an isomorphism of $G_n$-modules. Indeed let $\alpha = \alpha_{[x]}$ be the linear functional corresponding to $[x] = x+\cI_n$. For every $g\in G_n$, the linear functional $\alpha\cdot g$ maps every  $a\in A_n$ onto $\alpha(gag^{-1}) = \Tr(x(gag^{-1})) = \Tr(g^{-1}(xgag^{-1})g) = \Tr((g^{-1}xg)a)$, which is just the value taken by $\alpha_{[g^{-1}xg]} = \alpha_{[x]\cdot g}$ at $a$. In short, $\alpha_{[x]}\cdot g = \alpha_{[x]\cdot g}$, which proves our claim.

Henceforth we shall always identify $A_n^*$ with $\cM_n/\cI_n$, thus regarding the elements of $A_n^*$ as elements of $\cM_n/\cI_n$. Accordingly, if $\alpha\in A_n^*$ and $a\in A_n$ then $\tau(\alpha, a)$ (which, by a little abuse, can be written as $\Tr(\alpha\cdot a)$), is the value taken by $\alpha$ at $a$.

\subsection{The derivations of $\KK$}\label{derivazioni}

A \emph{derivation} of $\KK$ is a function
  $d:\KK\to\KK$ such that
  \begin{enumerate}[{\rm (D1)}]
  \item\label{D1} $d(x+y)= d(x)+ d(y)$;
  \item\label{D2} $d(xy)=d(x)y+x d(y)$.
  \end{enumerate}
Note that (D\ref{D2}) implies $d(1) = 0$. Hence $d(k\cdot 1) = 0$ for every integer $k$ and $d(t^{-1}) = -d(t)/t^2$ for every $t\neq 0$. Consequently, $d(t) = 0$ for every $t$ in the minimal subfield $\KK_0$ of $\KK$. Morever, $d(t^m) = mt^{m-1}d(t)$ for every positive integer $m$. Therefore, if $\cha(\KK) = p >  0$ then $d(t^p) = 0$ for every $t\in\KK$. In this case $\KK^p$ is the maximum subfield of $\KK$ such that all derivations of $\KK$ induce the null derivation on it \cite[chp. II, \S\S 12 and 17]{ZS}.

On the other hand, let $\cha(\KK) =0$ and suppose that $t\in \KK$ is algebraic over $\KK_0$. Then $d(t) =0$. In this case the algebraic closure of $\KK_0$ in $\KK$ is the maximal subfield of $\KK$ such that all derivations of $\KK$ are null on it \cite[chp. II, \S\S 12 and 17]{ZS}).  Accordingly,
\begin{prop}
The field $\KK$ admits only the null derivation if and only if it is either perfect of positive characteristic or algebraic over the rationals.
\end{prop}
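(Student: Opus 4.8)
The statement is an equivalence, and the direction ``$\Leftarrow$'' has essentially already been carried out in the discussion preceding the Proposition: if $\cha(\KK)=p>0$ and $\KK$ is perfect, then $\KK=\KK^p$, and since the identity $d(t^m)=mt^{m-1}d(t)$ gives $d(t^p)=0$ for all $t\in\KK$, every derivation vanishes on $\KK=\KK^p$; if $\cha(\KK)=0$ and $\KK$ is algebraic over $\KK_0=\QQ$, then each $t\in\KK$ satisfies its (necessarily separable) minimal polynomial $f$ over $\QQ$, so $0=d(f(t))=f'(t)\,d(t)$ with $f'(t)\neq 0$, whence $d(t)=0$. So the plan is to concentrate on the converse, which I would prove in contrapositive form: assuming $\KK$ is \emph{neither} perfect of positive characteristic \emph{nor} algebraic over $\QQ$, I will exhibit a nonzero derivation of $\KK$, splitting according to the characteristic.

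If $\cha(\KK)=0$, then by hypothesis $\KK$ is not algebraic over $\QQ$, so it has a nonempty transcendence basis $\mathcal B$ over $\QQ$; fix $t\in\mathcal B$. On the rational function field $\QQ(\mathcal B)$ the partial derivative $\partial/\partial t$ (the derivation sending $t$ to $1$ and every other member of $\mathcal B$ to $0$) is a well-defined nonzero derivation. Since the extension $\KK/\QQ(\mathcal B)$ is algebraic and we are in characteristic $0$, hence separable, this derivation extends uniquely to a derivation of $\KK$, which is again nonzero because its restriction to $\QQ(\mathcal B)$ is. Here I use only the standard facts, recorded in \cite[chp.~II]{ZS}, that a derivation may be prescribed arbitrarily on a transcendence basis and extends uniquely across a separable algebraic extension.

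If $\cha(\KK)=p>0$, then by hypothesis $\KK$ is not perfect, i.e.\ $\KK\neq\KK^p$. Here I would invoke the theory of $p$-bases: $\KK$ admits a $p$-basis $B$ over $\KK^p$, nonempty since $\KK\neq\KK^p$, with the property that every set map $B\to\KK$ extends to a (unique) derivation of $\KK$; choosing any nonzero such map yields a nonzero derivation. This is exactly what the results of \cite[chp.~II, \S\S12 and 17]{ZS} cited in the text provide --- indeed they say precisely that $\KK^p$ is the common kernel of all derivations of $\KK$, which gives both ``$\Leftarrow$'' in this characteristic and, by contraposition, a nonzero derivation whenever $\KK\neq\KK^p$.

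I expect this last point to be the only genuinely non-elementary ingredient, and the main obstacle: in positive characteristic one cannot produce a derivation nonvanishing at a prescribed imperfect element $t\notin\KK^p$ by a naive ``extend one generator at a time'' argument, since purely inseparable steps obstruct the extension of a derivation; one really needs the $p$-basis structure theorem (equivalently, the computation of the module of differentials $\Omega_{\KK/\FF_p}$). Everything else --- the characteristic-zero construction and the ``$\Leftarrow$'' direction in both characteristics --- is routine and has, for the most part, already been spelled out in the paragraphs preceding the Proposition.
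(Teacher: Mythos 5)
Your proof is correct and follows essentially the same route as the paper, which states the result as an immediate consequence of the facts it cites from Zariski--Samuel (that $\KK^p$, respectively the algebraic closure of $\QQ$ in $\KK$, is the maximal subfield killed by all derivations). You simply make explicit the constructions (transcendence bases in characteristic $0$, $p$-bases in characteristic $p$) that the paper delegates entirely to that citation.
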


\subsection{The result to be proved in this paper}

The derivations of $\KK$ form a $\KK$-vector space, denoted by $\Der(\KK)$, indeed a subspace of the $\KK$-vector space of all mappings from $\KK$ to $\KK$. Recall that, for every group $X$ and every $X$-module $V$ defined over $\KK$, the first cohomology group $H^1(X,V)$ of $X$ over $V$ is actually a $\KK$-vector space. The following is well known (V\"{o}lklein \cite{V}, also Taussky and Zassenhaus \cite{TZ}):

\begin{theorem}\label{noto}
Let $|\KK| > 4$. Then $H^1(G_n, A_n) ~\cong ~\Der(\KK)\oplus(A_n\cap \cI_n)$.
\end{theorem}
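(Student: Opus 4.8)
The plan is to transfer the problem to the full matrix module $\cM_n$ and then to bound $\dim_{\KK}H^1(G_n,\cM_n)$ from both sides. Consider the short exact sequence of $G_n$-modules $0\to A_n\to\cM_n\xrightarrow{\mathrm{Tr}}\KK\to 0$, with $\KK$ the trivial module. In the resulting long exact sequence, $H^0(G_n,\cM_n)=\cM_n^{G_n}=\cI_n$ (a matrix commuting with all elementary matrices is scalar), $H^0(G_n,\KK)=\KK$, the map between these is $cI_n\mapsto nc$, and $H^1(G_n,\KK)=\Hom(G_n,\KK^{+})=0$ because $\SL(n,\KK)$ is perfect for $n\geq 3$. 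Hence the connecting homomorphism $\delta\colon\KK\to H^1(G_n,A_n)$ is injective precisely when $\cha(\KK)\mid n$, with $1$-dimensional image in that case, while in all cases $H^1(G_n,A_n)/\mathrm{im}\,\delta\cong H^1(G_n,\cM_n)$. As $\dim_{\KK}(A_n\cap\cI_n)$ is $1$ exactly when $\cha(\KK)\mid n$ (then $I_n\in A_n$) and $0$ otherwise, the theorem is equivalent to the assertion $H^1(G_n,\cM_n)\cong\Der(\KK)$.

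For the lower bound, given $d\in\Der(\KK)$ apply $d$ entrywise to matrices, getting $d\colon\cM_n\to\cM_n$ with $d(xy)=d(x)y+xd(y)$ and $d(I_n)=0$; then $d(g^{-1})=-g^{-1}d(g)g^{-1}$, and differentiating the determinant gives $d(\det g)=\det(g)\,\Tr(g^{-1}d(g))$, so $\Tr(g^{-1}d(g))=0$ whenever $\det g=1$. A direct check shows $\gamma_d\colon g\mapsto -g^{-1}d(g)$ is a $1$-cocycle of $G_n$ on $\cM_n$ (in fact with values in $A_n$) and that $d\mapsto[\gamma_d]$ is $\KK$-linear. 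If $\gamma_d$ were a coboundary, say $\gamma_d(g)=g^{-1}ag-a$ with $a\in\cM_n$, then evaluating on the transvections $e_{ij}(t)=I_n+tE_{ij}$ ($i\neq j$, $t\in\KK$) forces $a$ to be diagonal, and then scalar since $n\geq 3$, whence $d(t)=t(a_{ii}-a_{jj})=0$ for all $t$. Thus $\Der(\KK)$ embeds in $H^1(G_n,\cM_n)$; running the same evaluation modulo $\mathrm{im}\,\delta$ also shows that this copy of $\Der(\KK)$ meets $A_n\cap\cI_n$ trivially inside $H^1(G_n,A_n)$.

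The reverse inequality, $\dim_{\KK}H^1(G_n,\cM_n)\leq\dim_{\KK}\Der(\KK)$, is the heart of the matter and the step I expect to be hardest. I would exploit that, for $n\geq 3$, $G_n$ is generated by the transvections $e_{ij}(t)$ ($i\neq j$, $t\in\KK$) subject to the Steinberg relations $e_{ij}(s)e_{ij}(t)=e_{ij}(s+t)$, $[e_{ij}(s),e_{kl}(t)]=1$ for $j\neq k$ and $i\neq l$, $[e_{ij}(s),e_{jk}(t)]=e_{ik}(st)$ for $i\neq k$, together with the relations attaching the diagonal torus $T$. For a cocycle $\gamma$ on $\cM_n$, its restriction to a root subgroup $U_{ij}=\{e_{ij}(t):t\in\KK\}$ is an additive map $f_{ij}\colon\KK^{+}\to\cM_n$. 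Since $|\KK|\geq 3$, for each $i\neq j$ some element of $T$ scales the weight space $\KK E_{ij}$ by a factor $\neq 1$, which kills the local contribution of that weight; subtracting a coboundary one may thus normalise $\gamma$ so that it vanishes on $T$ and the $E_{ij}$-component of each $f_{ij}$ is $\KK$-linear. The relation $[e_{ij}(s),e_{jk}(t)]=e_{ik}(st)$, inserted into the cocycle identity, then becomes an equation writing $f_{ik}(st)$ as a sum of a term linear in $s$, a term linear in $t$, and a cross-term; reading off the cross-term yields an additive map $d\colon\KK\to\KK$ with $d(st)=s\,d(t)+t\,d(s)$, i.e.\ a derivation, and shows that $\gamma-\gamma_d$ is a coboundary. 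Carrying this bookkeeping through all triples $(i,j,k)$, handling the Cartan directions, and discharging the small-field exceptions — where the full hypothesis $|\KK|>4$ is consumed and the fields $\FF_2,\FF_3,\FF_4$ are genuinely exceptional — is precisely the computation of V\"olklein \cite{V} (and, by a different route, of Taussky--Zassenhaus \cite{TZ}), which I would reproduce in this matrix setting.
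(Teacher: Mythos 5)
The paper does not actually prove Theorem~\ref{noto}: it is quoted as a known result of V\"{o}lklein \cite{V} and Taussky--Zassenhaus \cite{TZ}, and the paper's own work concerns the dual module $A_n^*$ instead. So there is no internal proof to measure you against; I can only judge your argument on its own terms. Your reduction via the long exact sequence attached to $0\to A_n\to\cM_n\xrightarrow{\Tr}\KK\to 0$ is correct: $H^1(G_n,\KK)=\Hom(G_n,\KK^{+})=0$ by perfectness, the connecting map contributes exactly one dimension precisely when $\cha(\KK)$ divides $n$, i.e.\ exactly when $A_n\cap\cI_n\neq 0$, and since everything in sight is a $\KK$-vector space the resulting extension splits automatically, so the theorem is indeed equivalent to $H^1(G_n,\cM_n)\cong\Der(\KK)$. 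Your injectivity argument ($g\mapsto -g^{-1}d(g)$ is a cocycle with values in $A_n$ by the Jacobi formula, and is a coboundary only for $d=0$, by evaluation on the transvections) is also sound; it is essentially the paper's own proof of Lemma~\ref{main2}.

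The gap is that the surjectivity half --- every class in $H^1(G_n,\cM_n)$ is represented by some $g\mapsto -g^{-1}d(g)$ --- is the entire content of the theorem, and you do not prove it: you sketch a normalisation-plus-Steinberg-relations strategy and then explicitly defer to \cite{V} and \cite{TZ}. Since the paper itself treats the statement as a citation, your proposal is no less complete than the source text, but it is not a self-contained proof, and the sketch glosses over exactly the delicate points (the torus normalisation, which already needs $|\KK|>3$, and the small-field degenerations) that make the hypothesis $|\KK|>4$ necessary. It is worth noting that the bookkeeping you outline is, in substance, what the paper carries out in Lemmas~\ref{s1l}--\ref{uni} for the dual module: there every cocycle is reduced on the generators $u_{kh}(t)$ to the form $[\alpha(t)\bee{kh}]$ with $\alpha$ a derivation, and via Corollary~\ref{trace ter} that computation gives $H^1(G_n,\cM_n)\cong\Der(\KK)$ for all $n>3$ with no restriction on $|\KK|$. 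So for $n>3$ your programme can be closed using the paper's own machinery instead of the external references; only for $n=3$ would the hypothesis $|\KK|>4$ have to do the work you assign to it.
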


Smith and V\"{o}lklein \cite[Lemma (2.2)]{SV} also prove the following:

\begin{theorem}\label{ri-noto}
We have $H^1(G_3, A^*_3) ~\cong ~\Der(\KK)$ for any choice of the field $\KK$.
\end{theorem}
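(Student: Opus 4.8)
The plan is to build an explicit $\KK$-linear isomorphism $\Der(\KK)\xrightarrow{\sim}H^1(G_3,A^*_3)$, using the identification $A^*_3=\cM_3/\cI_3$ and the fact that $G_3=\SL(3,\KK)$ is generated by its root subgroups subject to the Steinberg relations. First I would write down the map. Given a derivation $d$, let $D\colon\cM_3\to\cM_3$ apply $d$ entrywise; since $D$ obeys the Leibniz rule for matrix products and $\det g=1$ on $G_3$ (so $0=d(\det g)=\Tr(g^{-1}D(g))$), the formula $\gamma_d(g):=g^{-1}D(g)+\cI_3$ defines a (trace-zero) element of $\cM_3/\cI_3$ with $\gamma_d(gh)=(gh)^{-1}D(gh)=h^{-1}(g^{-1}D(g))h+h^{-1}D(h)=\gamma_d(g)\cdot h+\gamma_d(h)$, i.e.\ a $1$-cocycle; and $d\mapsto[\gamma_d]$ is clearly $\KK$-linear. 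Injectivity I would get from a short calculation: if $\gamma_d(g)=v\cdot g-v$ for a fixed $v\in\cM_3$ (read mod $\cI_3$), then evaluating at $x_{23}(t)=I_3+tE_{23}$ gives $d(t)E_{23}\equiv t(vE_{23}-E_{23}v)-t^2v_{32}E_{23}\pmod{\cI_3}$; since the left side has all diagonal entries zero, comparing diagonal entries forces $v_{32}=0$, and then the $(2,3)$-entry gives $d(t)=t(v_{22}-v_{33})$ for every $t$. But $t\mapsto ct$ obeys (D\ref{D2}) only if $ct^2=2ct^2$ for all $t$, hence $c=0$ and $d=0$.

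For surjectivity I would use the presentation of $G_3$ by its six root subgroups $x_{ij}(t)$ ($i\ne j$, $t\in\KK$) subject to the Steinberg relations --- additivity $x_{ij}(s)x_{ij}(t)=x_{ij}(s+t)$, the commuting relations, $[x_{12}(s),x_{23}(t)]=x_{13}(st)$ and its analogues, and the torus relations --- a presentation valid over every field. Given a cocycle $\gamma$, first normalise it on the diagonal torus $T$: as a $T$-module $\cM_3/\cI_3$ is the sum of its fixed part $(\cM_3/\cI_3)^T$ (diagonal matrices mod scalars) and the six root lines $\langle E_{ij}\rangle$, on each of which $T$ acts through a nontrivial character, so after subtracting a coboundary one may assume $\gamma|_T$ is a homomorphism $T\to(\cM_3/\cI_3)^T$, and one checks that $\gamma_d|_T$ is precisely the logarithmic-derivative homomorphism $\mathrm{diag}(a_1,a_2,a_3)\mapsto(d(a_1)/a_1,d(a_2)/a_2,d(a_3)/a_3)$ modulo scalars. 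Next, the cocycle identity makes $\gamma$ restricted to each $x_{ij}(\KK)$ equivariant for the $T$-action; decomposing $\gamma(x_{ij}(s))$ into $T$-weight components and using additivity, every component other than the $E_{ij}$-one is determined and absorbed into a further coboundary, leaving $\gamma(x_{ij}(s))\equiv c_{ij}(s)E_{ij}$ with $c_{ij}\colon\KK\to\KK$ additive. Finally, transporting $[x_{12}(s),x_{23}(t)]=x_{13}(st)$ and its companions through $\gamma$ yields identities $c_{13}(st)=s\,c_{23}(t)+t\,c_{12}(s)$, while the relations between opposite root subgroups identify all the $c_{ij}$ with a single additive map $d$; then $d(st)=s\,d(t)+t\,d(s)$, so $d\in\Der(\KK)$, and matching the torus data one finds $\gamma-\gamma_d$ is a coboundary, so $[\gamma]=[\gamma_d]$.

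Two points finish the argument. For $\KK\in\{\FF_2,\FF_4\}$ --- and also $\FF_3$, if one wants $T$ to have order prime to $\cha(\KK)$ in the step above --- the field is perfect, so $\Der(\KK)=0$ and it suffices to check $H^1(G_3,A^*_3)=0$ directly, a finite computation with the small Steinberg presentations of $\SL(3,\FF_2)\cong\mathrm{PSL}(2,7)$, $\SL(3,\FF_3)$ and $\SL(3,\FF_4)$. Conversely, when $\cha(\KK)\ne 3$ one can skip the construction entirely: there $\cM_3=A_3\oplus\cI_3$, so $A^*_3\cong A_3$ as $G_3$-modules with $A_3\cap\cI_3=0$, and Theorem~\ref{noto} already yields $H^1(G_3,A^*_3)\cong\Der(\KK)$ for $|\KK|>4$. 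I expect the main obstacle to be the torus normalisation and its aftermath: a priori $\gamma|_T$ could represent a large subspace of $\Hom(T,(\cM_3/\cI_3)^T)$ when $\KK$ is infinite --- note $\Hom(T,\KK^+)$ need not vanish --- and cutting this down to exactly the logarithmic derivatives of derivations, together with showing the rest of $\gamma$ is then forced, rests squarely on the rank-$2$ relation $x_{13}=[x_{12},x_{23}]$; this is the structural reason the statement holds over \emph{every} field for $n=3$, whereas the general-$n$ problem of this paper needs $n>3$ and sidesteps $|\KK|\in\{2,4\}$ with $n$ even.
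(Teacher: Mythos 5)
First, a point of context: the paper does not prove Theorem~\ref{ri-noto} at all --- it is quoted from Smith--V\"olklein \cite[Lemma (2.2)]{SV}, and the authors explicitly set the case $n=3$ aside because their key reduction (Lemma~\ref{s1l}) needs two root subgroups $U_{kh}$, $U_{pq}$ with $|\{k,h,p,q\}|=4$, which does not exist in $\SL_3$. So your proposal is necessarily a reconstruction rather than a match. The first half of it is fine: the map $d\mapsto\llbracket\gamma_d\rrbracket$ is exactly the paper's $f_\alpha$ of Lemma~\ref{main2} (whose proof is valid for $n=3$), your use of Jacobi's formula to see that $g^{-1}D(g)$ is traceless is correct, and your injectivity argument (reading the coboundary condition on $x_{23}(t)$, using the zero diagonal to kill $v_{32}$ and the scalar, and then letting the Leibniz rule force $c=2c$) is a clean shortcut that works. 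Your observation that for $\cha\KK\neq 3$, $|\KK|>4$ one can instead quote Theorem~\ref{noto} also agrees with the paper's own discussion around Lemma~\ref{pchar}.

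The gap is in surjectivity, and it sits precisely where $n=3$ is genuinely harder than $n>3$. The step ``decomposing $\gamma(x_{ij}(s))$ into $T$-weight components\dots every component other than the $E_{ij}$-one is determined and absorbed into a further coboundary'' is the entire content of Lemmas~\ref{s1l}--\ref{rn>3}, and for $n=3$ the paper's mechanism for it is unavailable; your proposed substitute (conjugation by the torus) is only asserted. To make it work you must actually run the weight analysis: e.g.\ the entries of $\gamma(x_{12}(s))$ in positions $(2,3)$, $(3,1)$, $(3,3)$ (the ones Lemma~\ref{s1l} would kill) satisfy $c_{ij}(\lambda s)=\mu\,c_{ij}(s)$ with $\lambda=a_2/a_1$, $\mu=a_j/a_i$, and you need torus elements with $\lambda=1$, $\mu\neq1$, which fails or degenerates exactly over small fields (over $\FF_3$ the characters of opposite roots even coincide, since $a^2=1$ on $\FF_3^*$). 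You acknowledge this by deferring $\FF_2,\FF_3,\FF_4$ to ``a finite computation,'' but that computation is not performed, and it is not a formality --- the analogous computation for $(n,\KK)=(4,\FF_2)$ occupies all of Subsection~\ref{caso n=4 e K=F_2} of the paper. Two smaller omissions: before concluding $c_{12}=c_{23}=c_{13}=\cdots$ from $c_{13}(st)=s\,c_{23}(t)+t\,c_{12}(s)$ you must first normalise $c_{ij}(1)=0$ by subtracting the coboundary of a diagonal matrix (the paper's Lemma~\ref{canon-repr}); and throughout, equalities in $\cM_3/\cI_3$ only hold modulo scalars when $\cha\KK=3$, so each coefficient comparison needs the Cleaning Argument (CA) or its analogue. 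As it stands the proposal is a credible strategy with the decisive steps unverified, not a proof.
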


The core of the proof sketched in \cite{SV} for Theorem~\ref{ri-noto}  amounts to the claim that when $|\KK| > 9$ the isomorphism $H^1(G_n, A^*_n) ~\cong ~\Der(\KK)$ follows from Theorem \ref{noto}. The cases with $|\KK| \leq 9$ are dealt  with the help of other results from homological algebra. As we shall see in a few lines, the above claim is obviously true when $\cha(\KK)$ is either $0$ or positive but prime to $n$, even with the hypothesis $|\KK| > 9$ replaced by $|\KK| > 4$. In contrast, when $\cha(\KK)$ is positive and divides $n$ (and $|\KK| > 9$ as assumed in \cite{SV}) we do not see any easy way to obtain the statement of Theorem \ref{ri-noto} from Theorem \ref{noto}. Regretfully, the authors of \cite{SV} say nothing that can help the reader to understand this point.

In this paper, leaving aside the case $n = 3$, which we believe it has been fixed in \cite{SV}, we shall focus on the case $n > 3$, providing a straightforward (but rather laborious) proof of the following.

\begin{theorem}\label{main-main}
Let $n > 3$. Then $H^1(G_n,A_n^*) \cong\mathrm{Der}(\KK)$ for every choice of the field $\KK$.
\end{theorem}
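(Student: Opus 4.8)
The plan is to work directly with the identification $A_n^* \cong \cM_n/\cI_n$ established above and to compute the group $Z^1(G_n, \cM_n/\cI_n)$ of $1$-cocycles modulo coboundaries by a generators-and-relations argument on $G_n = \SL(n,\KK)$. Recall that a $1$-cocycle is a map $\gamma\colon G_n \to \cM_n/\cI_n$ with $\gamma(gh) = \gamma(g)\cdot h + \gamma(h)$, and a coboundary has the form $g \mapsto v\cdot g - v$ for fixed $v$. Since $G_n$ is generated by the root subgroups (elementary transvections) $x_{ij}(t) = I_n + t E_{ij}$ for $i \neq j$ and $t \in \KK$, a cocycle is determined by its values on these elements; the relations among the $x_{ij}(t)$ (the Steinberg/Chevalley commutator relations, together with $x_{ij}(s)x_{ij}(t) = x_{ij}(s+t)$) translate into functional equations for the coordinate functions of $\gamma$. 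First I would normalize: after subtracting a suitable coboundary one may arrange that $\gamma$ vanishes on a chosen maximal torus, or at least simplify $\gamma$ on the diagonal subgroup; then the additivity relation $x_{ij}(s+t) = x_{ij}(s)x_{ij}(t)$ forces each coordinate of $\gamma(x_{ij}(t))$ to be an additive function of $t$, and conjugation by the torus forces these additive functions to be $\KK_0$-linear combinations of a single derivation-like map. The commutator relations $[x_{ij}(s), x_{jk}(t)] = x_{ik}(st)$ (for distinct $i,j,k$) then pin down the cross terms and, crucially, force the appearance of the Leibniz rule (D2): the map $t \mapsto (\text{off-diagonal coordinate of }\gamma(x_{ij}(t)))$ must satisfy $d(st) = d(s)t + s\,d(t)$, i.e. be a derivation. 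This is exactly where $n > 3$ helps: with at least four indices available one has enough distinct triples $\{i,j,k\}$ to propagate the constraints and eliminate the exceptional low-field phenomena that plagued the $|\KK| \in \{2,4\}$, $n$ even case in \cite{SV}.

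The concrete steps, in order, are: \emph{(i)} fix the cocycle $\gamma$ and reduce mod coboundaries to bring $\gamma$ into a normal form supported on the positive root subgroups, exploiting the Bruhat-type decomposition or simply the fact that $\cM_n/\cI_n$ decomposes under the diagonal torus $T$ into weight spaces; \emph{(ii)} using $T$-conjugation, decompose $\gamma|_{U}$ (for $U$ the group of upper unitriangular matrices, say) into its weight components and show each off-diagonal root subgroup contributes an additive map $d_{ij}\colon \KK \to \KK$ into the corresponding root space $\langle E_{ij}\rangle$, plus possibly a "diagonal-valued" piece; \emph{(iii)} apply the commutator relations among the $x_{ij}(t)$ to show all the $d_{ij}$ coincide with a single additive map $d$ and that $d$ satisfies (D2), hence $d \in \Der(\KK)$; \emph{(iv)} conversely check that every $d \in \Der(\KK)$ yields a genuine cocycle $\gamma_d$ (this is the "easy" direction — $\gamma_d(g) := [d(g) \cdot g^{-1}]$ where $d$ is applied entrywise to the matrix $g$, a standard construction), and that $\gamma_d$ is a coboundary only when $d = 0$; \emph{(v)} handle the Weyl-group / negative-root generators to confirm the normal form in (i) was achievable and that nothing new appears there. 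Throughout, the identification $\tau(\alpha,a) = \Tr(\alpha a)$ lets one convert every statement about functionals into a trace computation with matrices, which keeps the bookkeeping manageable.

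The main obstacle I expect is step \emph{(iii)} in the small-field cases, specifically $|\KK| = 2$ and $|\KK| = 4$ with $n$ even. When $|\KK|$ is small the torus $T$ is tiny (trivial when $|\KK| = 2$), so the clean weight-space decomposition in step \emph{(ii)} is unavailable and one cannot separate the additive pieces by eigenvalue arguments; moreover when $\cha(\KK) \mid n$ the scalar matrices $\cI_n$ lie inside $A_n$, so $A_n \cap \cI_n \neq 0$ and the extra summand in Theorem \ref{noto} is genuinely present, which is precisely why the naive reduction from Theorem \ref{noto} fails and why $A_n^*$ (rather than $A_n$) is the right module to look at. The workaround is to lean entirely on the commutator relations and the additivity of the coordinate functions — over $\FF_2$ or $\FF_4$ one still has the relations $[x_{ij}(1), x_{jk}(1)] = x_{ik}(1)$ and enough distinct triples (because $n \geq 4$) to force the cocycle into the desired shape by a finite, if tedious, case analysis; the fact that $\Der(\FF_2) = 0$ while $\Der(\FF_4) = 0$ as well (both being perfect of positive characteristic) means the target answer in those cases is simply $H^1 = 0$, so one "merely" has to show every cocycle is a coboundary, which is a cleaner endpoint than the general statement. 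Verifying that the reduction in step \emph{(i)} genuinely works without a large torus — i.e. that one can still kill the unwanted part of $\gamma$ by a coboundary — will be the technical heart of the argument.
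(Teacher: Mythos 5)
Your overall strategy is the one the paper actually follows: restrict a cocycle to the root elements $u_{ij}(t)=I_n+t\bee{ij}$, use the Steinberg relations (additivity in $t$, commutation of root subgroups, and the commutator relation $[u_{ij}(s),u_{jk}(t)]=u_{ik}(st)$) to pin down the coordinate functions, subtract explicit coboundaries to reach a normal form $f(u_{kh}(t))=[\alpha(t)\bee{kh}]$, and check separately that $\alpha\mapsto[g^{-1}\alpha(g)]$ is an injection of $\Der(\KK)$ into $H^1$. Two points, however, deserve to be flagged as genuine gaps rather than mere omissions of detail.

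First, your normalization in steps (i)--(ii) leans on the diagonal torus and a weight-space decomposition, and you yourself observe that this collapses when $|\KK|=2$. The mechanism that actually does the work in the paper is different and torus-free: for $|\{k,h,p,q\}|=4$ the relation $u_{pq}(s)u_{kh}(t)=u_{kh}(t)u_{pq}(s)$, fed through the cocycle identity, annihilates all entries of $F(u_{kh}(t))$ outside row $k$ and column $h$; a further coboundary correction (built from the diagonal entries $a^{kh}_{hh}(1)$, not from torus eigenvalues) removes the rest. So the "workaround" you defer to is in fact the entire argument, and it is where the condition $n>3$ enters (you need four distinct indices). This is recoverable, but as written your plan's main reduction step does not go through in the cases you most need it for.

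Second, and more seriously: your claim that over $\FF_2$ with $n\geq 4$ "enough distinct triples" make the case analysis close up is false for $(n,\KK)=(4,\FF_2)$. With $n=4$ one cannot choose a fifth index, and over $\FF_2$ one cannot vary the parameter $s$, so the commutation constraints (the analogue of the paper's Table 1) genuinely fail to force enough entries of $F(u_{kh}(t))$ to vanish; the scalar ambiguity $\lambda(s,t)$ cannot be eliminated by these relations alone. The paper disposes of this single case by an exhaustive machine computation showing $Z^1(\SL(4,\FF_2),A_4^*)=B^1(\SL(4,\FF_2),A_4^*)$, which is consistent with $\Der(\FF_2)=0$ but is not a consequence of the elementary relations you propose to use. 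Your plan as stated would stall precisely there, so either a separate ad hoc argument or a computation is required for $(4,\FF_2)$.
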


As said above, when $|\KK| > 4$ and $\cha(\KK)$ is either $0$ or positive but  prime to $n$ then the conclusion of Theorem \ref{main-main} immediately follows from Theorem \ref{noto}. Indeed, let $p=\mathrm{char}(\KK)$. As $\Tr(I_n) = n$, we have $I_n \in A_n$ if and only if $p$ is positive and divides $n$. Hence if $p$ is positive and prime to $n$ or $p = 0$ then $\cM_n/\cI_n\cong A_n$. Note also that $\cM_n/\cI_n$ admits no non-trivial proper submodule while, if $I_n\in A_n$, then $\cI_n$ is a proper submodule of $A_n$. Consequently,

\begin{lemma}\label{pchar}
We have $A_n^*\cong A_n$ (as $G_n$-modules) if and only if either $p = 0$ or $p > 0$ and $(p,n) = 1$.
\end{lemma}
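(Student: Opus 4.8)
The plan is to prove Lemma~\ref{pchar}, which asserts that $A_n^* \cong A_n$ as $G_n$-modules if and only if either $p = 0$ or $p > 0$ with $(p,n) = 1$. Both implications will be extracted from the structural facts already assembled in the excerpt: the identification $A_n^* \cong \cM_n/\cI_n$, the observation that $I_n \in A_n$ precisely when $p \mid n$, and the two submodule facts stated just before the lemma, namely that $\cM_n/\cI_n$ has no non-trivial proper submodule while, when $I_n \in A_n$, the line $\cI_n$ is a proper non-trivial submodule of $A_n$.

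First I would dispose of the easy direction. Suppose $p = 0$, or $p > 0$ and $(p,n) = 1$. In either case $n$ is a unit in $\KK$, so $\Tr(I_n) = n \neq 0$, hence $I_n \notin A_n$ and therefore $\cI_n \cap A_n = 0$. This means the natural composite $A_n \hookrightarrow \cM_n \twoheadrightarrow \cM_n/\cI_n$ is injective; comparing dimensions ($\dim A_n = n^2 - 1 = \dim \cM_n/\cI_n$, valid since $\cI_n$ is one-dimensional) it is an isomorphism of vector spaces, and it is visibly $G_n$-equivariant because the adjoint action commutes with the quotient by $\cI_n$. Combining with $A_n^* \cong \cM_n/\cI_n$ gives $A_n^* \cong A_n$ as $G_n$-modules.

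For the converse I would argue by contraposition: assume $p > 0$ and $p \mid n$, and show $A_n \not\cong A_n^*$. In this case $I_n \in A_n$, so $\cI_n$ is a submodule of $A_n$ with $0 \neq \cI_n \neq A_n$ (the latter because $n \geq 3 > 1$). Thus $A_n$ possesses a non-trivial proper $G_n$-submodule. On the other hand $A_n^* \cong \cM_n/\cI_n$, which by the cited fact has no non-trivial proper submodule. A module admitting a non-trivial proper submodule cannot be isomorphic to one that does not, so $A_n \not\cong A_n^*$ as $G_n$-modules. This completes the contrapositive, and with the first paragraph the biconditional.

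The only point requiring genuine care — and the natural candidate for the ``main obstacle'' — is the submodule-theoretic input: that $\cM_n/\cI_n$ is an irreducible (indeed simple) $G_n$-module in \emph{all} characteristics dividing $n$, and symmetrically that $\cI_n$ is the \emph{unique} minimal submodule of $A_n$ so that $A_n$ genuinely fails to be simple. These facts are asserted in the excerpt as known, so here they may be invoked directly; were one to prove them from scratch one would use the action of the root subgroups (conjugation by elementary transvections) to show any non-zero element of $\cM_n/\cI_n$ generates everything, and dually that any proper submodule of $A_n$ lies in $\cI_n$. Everything else is a dimension count and the equivariance of the quotient map, which are routine.
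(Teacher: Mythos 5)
Your proof is correct and follows essentially the same route as the paper's: both directions rest on the observation that $I_n\in A_n$ exactly when $p>0$ divides $n$, the identification $A_n^*\cong\cM_n/\cI_n$, and the contrast between the simplicity of $\cM_n/\cI_n$ and the proper submodule $\cI_n\subseteq A_n$ in the divisible case. The paper gives this argument only in compressed form in the paragraph preceding the lemma, so your more explicit write-up (dimension count, equivariance of the quotient map) is a faithful expansion rather than a different proof.
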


By Lemma \ref{pchar} and Theorem \ref{noto} and since $I_n\in A_n$ if and only if $0 < p \mid n$, the statement of Theorem \ref{main-main} holds true when $|\KK| > 9$ and either $p = 0$ or $p > 0$ is prime to $n$. However, in the proof we shall give in this paper for Theorem \ref{main-main} we will make no use of Theorem \ref{noto}, not even in the favorable case where $p$ is null or positive and prime to $n$.

We fix some notation. Recall that the elements of the first cohomology group $H^1(G_n, A_n^*)$ of $G_n$ are the cosets of the subspace $B^1(G_n, A_n^*)$ of $1$-coboundaries in the space $Z^1(G_n, A_n^*)$ of $1$-cocycles. If $f\in Z^1(G_n, A_n^*)$ we put 
\[\llbracket f\rrbracket ~:= ~ f+B^1(G_n, A_n^*).\] 
If $\phi = \llbracket f\rrbracket$ for a $1$-cocycle $f\in Z^1(G_n, A^*_n)$ we say that $f$ is a {\em representative} of $\phi$ in $Z^1(G_n,A^*_n)$ (also a {\em representative} of $\phi$ for short if the context makes it clear that $f \in Z^1(G_n,A^*_n)$). Moreover if $F\in C^1(\cM_n,G_n)$ is such that 
\[f(g)~ = ~ [F(g)] ~ = ~ F(g)+ \cI_n ~ \in \cM_n/\cI_n \cong A^*_n, ~~ \forall g\in G_n\]
then we say that $F$ is a {\em representative} of $f$ in the space $C^1(\cM_n,G_n)$ of $1$-cochains of $G_n$ over $\cM_n$ (also a {\em representative} of $\phi$ for short if the context makes it clear that $F\in C^1(\cM_n,G_n)$).  f $F\in C^1(\cM_n,G_n)$ represents $f\in Z^1(A^*_n,G_n)$ we write$f= [F]$. 

Henceforth we denote by $\bee{kh}$ the $n\times n$ matrix with all null entries but the $(k,h)$-entry, which is $1$. For $k \neq h$ and $s\in \KK$ we put $u_{kh}(s) := I_n + s\bee{kh}$. Recall that $U_{ks} := \{u_{kh}(s)\}_{s\in \KK}$ is an abelian subgroup of $G_n$, called a {\em root subgroup}. The subgroups $U^+ := \langle U_{kh}\rangle_{k < h}$ and $U^- := \langle U_{kh}\rangle_{k > h}$ are maximal nilpotent subgroups of $G_n$, they meet trivially and their union generates $G_n$. Consequently,

\begin{lemma}\label{main1}
Every $1$-cocycle of $G_n$ over $A^*_n$ is uniquely determined by the value it takes on each of the generators $u_{kh}(s)$ of $G_n$. The same is true for $\cM_n$ and $A_n$.
\end{lemma}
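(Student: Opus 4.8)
The plan is to prove Lemma~\ref{main1}. Recall that a $1$-cocycle $f \in Z^1(G_n, A_n^*)$ satisfies the cocycle identity $f(gh) = f(g)\cdot h + f(h)$ for all $g, h \in G_n$; equivalently $f(g) = f(g')\cdot g + f(g'{}^{-1}g)$ whenever $g = g' \cdot (g'{}^{-1}g)$. From $f(1) = f(1 \cdot 1) = f(1) \cdot 1 + f(1)$ we get $f(1) = 0$, and from $0 = f(1) = f(g g^{-1}) = f(g)\cdot g^{-1} + f(g^{-1})$ we get $f(g^{-1}) = -f(g)\cdot g^{-1}$. More generally, an easy induction on word length shows that for any word $g = y_1 y_2 \cdots y_m$ in a generating set $S$ closed under inverses, $f(g)$ is completely determined by the values $f(y)$, $y \in S$: indeed $f(y_1 \cdots y_m) = f(y_1)\cdot(y_2 \cdots y_m) + f(y_2 \cdots y_{m-1}\text{-part})\cdots$, more cleanly $f(y_1 \cdots y_m) = \sum_{i=1}^m f(y_i)\cdot(y_{i+1}\cdots y_m)$, proved by induction using the cocycle identity. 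Hence two cocycles agreeing on $S$ agree on $\langle S\rangle = G_n$.

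So the key step is to exhibit such a generating set consisting of (or contained in the set of) the matrices $u_{kh}(s)$. This is precisely what the paragraph preceding the lemma recalls: the root subgroups $U_{kh} = \{u_{kh}(s) : s \in \KK\}$ for $k < h$ generate $U^+$, those for $k > h$ generate $U^-$, and $U^+ \cup U^-$ generates all of $G_n = \SL(n,\KK)$ (this is the standard Bruhat/Gauss decomposition fact for $\SL(n,\KK)$ over any field; it can be cited or, if desired, reproved by elementary row-and-column operations showing every matrix of determinant $1$ is a product of transvections $u_{kh}(s)$). Since the set $S = \{u_{kh}(s) : k \ne h,\ s \in \KK\}$ is closed under inversion — indeed $u_{kh}(s)^{-1} = u_{kh}(-s)$ because $\bee{kh}^2 = 0$ for $k \ne h$ — the induction above applies directly and gives the first assertion.

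For the final sentence of the lemma, the same argument works verbatim with $A_n^*$ replaced by $\cM_n$ or by $A_n$: nowhere did the argument use anything about the target module beyond its being a $\KK G_n$-module, so a $1$-cocycle of $G_n$ over $\cM_n$ (respectively $A_n$) is likewise determined by its values on the $u_{kh}(s)$. One can simply remark that the proof is identical.

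The only genuine content here is the group-theoretic fact that the transvections $u_{kh}(s)$ generate $\SL(n,\KK)$, which is classical and is already asserted in the excerpt; everything else is the routine cocycle bookkeeping sketched above. I therefore expect no real obstacle — the ``hard part,'' such as it is, is merely stating the telescoping formula $f(y_1\cdots y_m) = \sum_{i=1}^m f(y_i)\cdot(y_{i+1}\cdots y_m)$ cleanly and noting its proof by induction on $m$ via the cocycle relation.
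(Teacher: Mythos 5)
Your proof is correct and takes essentially the same approach as the paper, which offers no separate proof at all: it states the lemma as an immediate consequence (``Consequently, \dots'') of the fact that the elements $u_{kh}(s)$ generate $G_n$, which is exactly the telescoping-on-generators argument $f(y_1\cdots y_m)=\sum_{i=1}^m f(y_i)\cdot(y_{i+1}\cdots y_m)$ that you spell out. (One typo in your aside: the identity should read $f(g)=f(g')\cdot(g'^{-1}g)+f(g'^{-1}g)$, not $f(g')\cdot g+f(g'^{-1}g)$, but your induction uses the correct form.)
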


Given a derivation $\alpha$ and an element $g =\sum_{i,j=1}^ng_{ij}\bee{ij}$ of $G_n$, we put $\alpha(g) := \sum_{i,j=1}^n\alpha(g_{ij})\bee{ij}$ ($\in \cM_n$). The next lemma is well known (see e.g. \cite[\S 5]{TZ}). However, for the sake of completeness, we shall recall its proof in Section \ref{TZ}.

\begin{lemma}\label{main2}
For every derivation $\alpha\in \Der(\KK)$, the mapping
\[f_{\alpha} ~:~ g\in G_n ~ \longrightarrow ~ [g^{-1}\alpha(g)] = g^{-1}\alpha(g) + \cI_n \in A^*_n\]
belongs to $Z^1(G_n, A^*_n)$. Moreover, $f_{\alpha}\in B^1(G_n, A^*_n)$ if and only if $\alpha = 0$.
\end{lemma}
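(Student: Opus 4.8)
The plan is to verify the cocycle identity directly and then analyse the coboundary condition. First I would recall that a $1$-cocycle $f\colon G_n\to A_n^*$ is a map satisfying $f(gh)=f(g)\cdot h+f(h)$ for all $g,h\in G_n$ (using the right action of $G_n$ on $A_n^*\cong\cM_n/\cI_n$, namely $[x]\cdot g=g^{-1}xg+\cI_n$), and that the coboundaries are the maps of the form $g\mapsto [v]\cdot g-[v]$ for a fixed $[v]\in\cM_n/\cI_n$. For the first assertion I would compute $\alpha(gh)$ using the Leibniz rule entrywise: writing $g=\sum g_{ij}\bee{ij}$ and $h=\sum h_{ij}\bee{ij}$, the product rule (D\ref{D2}) applied to each entry of $gh$ gives $\alpha(gh)=\alpha(g)h+g\alpha(h)$. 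Then
\[
f_\alpha(gh)=(gh)^{-1}\alpha(gh)+\cI_n = h^{-1}g^{-1}\bigl(\alpha(g)h+g\alpha(h)\bigr)+\cI_n
= h^{-1}\bigl(g^{-1}\alpha(g)\bigr)h + h^{-1}\alpha(h)+\cI_n,
\]
which is exactly $f_\alpha(g)\cdot h + f_\alpha(h)$. (One subtlety worth a line: $\alpha(g^{-1})=-g^{-1}\alpha(g)g^{-1}$, obtained by applying $\alpha$ to $g g^{-1}=I_n$, so everything is consistent; also $\alpha(I_n)=0$ since $\alpha$ kills the entries $0$ and $1$, hence $f_\alpha(I_n)=0$ as required of a cocycle.) This shows $f_\alpha\in Z^1(G_n,A_n^*)$.

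For the second assertion, first note that if $\alpha=0$ then $f_\alpha=0\in B^1(G_n,A_n^*)$, so only the converse needs work. Suppose $f_\alpha\in B^1(G_n,A_n^*)$, say $f_\alpha(g)=[v]\cdot g-[v]$ for some fixed $v\in\cM_n$; equivalently $g^{-1}\alpha(g)-g^{-1}vg+v\in\cI_n$ for all $g\in G_n$, i.e. $\alpha(g)-vg+gv\in g\cI_n=\cI_n$ for every $g$. It suffices to test this on the generators $u_{kh}(s)=I_n+s\bee{kh}$ (here we can invoke Lemma \ref{main1} to know these generate, though for this argument we really just need the conclusion for these particular elements). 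We have $\alpha(u_{kh}(s))=\alpha(s)\bee{kh}$, while $-v\,u_{kh}(s)+u_{kh}(s)\,v = -s v\bee{kh}+s\bee{kh}v$. Writing $v=(v_{ij})$, the matrix $s\bee{kh}v - s v\bee{kh}$ has its $(k,k)$-entry equal to $s\,v_{hk}$ minus the $(k,k)$-entry contribution $-s\,v_{kk}$... — more carefully, $\bee{kh}v$ has rows: only row $k$ is nonzero, equal to row $h$ of $v$; and $v\bee{kh}$ has columns: only column $h$ is nonzero, equal to column $k$ of $v$. So the diagonal of $s\bee{kh}v-sv\bee{kh}$ is $s\,v_{hk}$ in position $(k,k)$, $-s\,v_{hk}$ in position $(h,h)$ (when $k\neq h$, these are the only diagonal contributions from the second term), and $0$ elsewhere; and $\alpha(u_{kh}(s))$ adds $\alpha(s)$ in position $(k,h)$ only. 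For the whole matrix $\alpha(u_{kh}(s))+s\bee{kh}v-sv\bee{kh}$ to lie in $\cI_n$ (a scalar matrix), comparing off-diagonal entries away from $(k,h)$ forces the corresponding entries of $v$ to vanish, comparing the $(k,h)$-entry forces $\alpha(s)=s(v_{hh}-v_{kk})$, and comparing diagonal entries forces $s\,v_{hk}$ to be independent of the position, hence $0$ for $n\geq 3$ (pick a third index). The upshot is $\alpha(s)=c_{kh}\,s$ for all $s\in\KK$, where $c_{kh}=v_{hh}-v_{kk}$ is a constant depending only on $k,h$. But then $\alpha$ is $\KK$-linear of the form $s\mapsto c s$; applying the Leibniz rule (D\ref{D2}), $c\,st=\alpha(st)=\alpha(s)t+s\alpha(t)=2cst$, so $c st=0$ for all $s,t$, whence $c=0$ and $\alpha=0$.

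I expect the only genuinely delicate point to be the bookkeeping in the coboundary computation: keeping straight which entries of $\bee{kh}v$ and $v\bee{kh}$ are nonzero and extracting from ``$\in\cI_n$'' the right constraints on $v$ and on $\alpha$. Everything else — the Leibniz rule entrywise, the identity $\alpha(g^{-1})=-g^{-1}\alpha(g)g^{-1}$, and the final algebraic collapse $c=2c\Rightarrow c=0$ — is routine. Since the excerpt says the proof will be recalled in Section \ref{TZ} for completeness, I would keep the write-up brief, emphasising the cocycle verification and citing \cite[\S 5]{TZ} for the standard parts.
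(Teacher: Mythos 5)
Your overall strategy coincides with the paper's: verify the cocycle identity via the entrywise Leibniz rule $\alpha(gh)=\alpha(g)h+g\alpha(h)$, and for the coboundary part evaluate on the generators $u_{kh}(s)$ and compare matrix entries to force $v$ to be scalar and $\alpha(s)=0$. The cocycle verification and the final algebraic collapse are correct.

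There is, however, one genuinely false step in the coboundary analysis. From $g^{-1}\alpha(g)-g^{-1}vg+v\in\cI_n$ you multiply on the left by $g$ and assert $g\cI_n=\cI_n$. This fails for non-scalar $g$: one has $g\cI_n=\KK g$, so the correct conclusion is $\alpha(g)-vg+gv=\lambda(g)\,g$ for a scalar $\lambda(g)$, not $\lambda(g)I_n$. For $g=u_{kh}(s)$ the discrepancy is an extra term $\lambda(g)s\bee{kh}$ sitting precisely in position $(k,h)$ --- the very entry from which you read off $\alpha(s)$. The reason your conclusions nonetheless survive is that your own diagonal comparison (the $(k,k)$-, $(h,h)$- and third-index diagonal entries give $sv_{hk}=\lambda=-sv_{hk}=0$, using $n\geq 3$) forces $\lambda(g)=0$, after which the two conditions coincide and everything you deduce ($v$ diagonal, $\alpha(s)=s(v_{kk}-v_{hh})$ up to sign, hence $\alpha$ linear, hence $\alpha=0$) goes through. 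So the gap is local and easily repaired: either carry the term $\lambda(g)s\bee{kh}$ along until $\lambda=0$ is established, or do not multiply by $g$ at all, observing that $u_{kh}(s)^{-1}\alpha(u_{kh}(s))=\alpha(s)\bee{kh}$ already, since $\bee{kh}^2=0$ --- this is how the paper proceeds, keeping the scalar ambiguity in the honest form $\lambda(g)I_n$ throughout. A small shortcut at the end: once $\alpha(s)=cs$, the identity $\alpha(1)=0$, valid for any derivation, gives $c=0$ immediately, with no need for the $c=2c$ computation.
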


With $f_{\alpha}$ as above, we have $f_{\alpha}(u_{kh}(s)) = [\alpha(s)\bee{kh}]$ for every choice of $k\neq h$ and $s\in \KK$. Therefore, in view of Lemma \ref{main1},
\begin{corollary}\label{main3}
With $\alpha$ and $f_{\alpha}$ as in Lemma \ref{main2}, $\llbracket f_{\alpha}\rrbracket$ is the unique element of $H^1(G_n,A^*_n)$ which admits a representative $f \in Z^1(G_n, A^*_n)$ such that $f(u_{kh}(s)) = [\alpha(s)\bee{kh}]$ for every generator $u_{kh}(s)$ of $G_n$.
\end{corollary}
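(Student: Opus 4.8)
The plan is to deduce the statement directly from Lemmas~\ref{main1} and~\ref{main2} together with the explicit value of $f_{\alpha}$ on the root subgroups. First I would dispose of the existence half: by Lemma~\ref{main2} the map $f_{\alpha}$ is a genuine $1$-cocycle of $G_n$ over $A^*_n$, so $\llbracket f_{\alpha}\rrbracket$ is a well-defined element of $H^1(G_n,A^*_n)$ and it is represented by $f_{\alpha}$ itself. It then remains to confirm that $f_{\alpha}$ takes the prescribed values on the generators, namely that $f_{\alpha}(u_{kh}(s)) = [\alpha(s)\bee{kh}]$. For this I would compute $\alpha(u_{kh}(s)) = \alpha(I_n + s\bee{kh}) = \alpha(s)\bee{kh}$, using $\alpha(1) = 0$ (so the diagonal contributes nothing) together with $k\neq h$; then, since $u_{kh}(s)^{-1} = I_n - s\bee{kh}$ and $\bee{kh}^2 = 0$ for $k\neq h$, one gets $u_{kh}(s)^{-1}\alpha(u_{kh}(s)) = \alpha(s)\bee{kh}$, whence $f_{\alpha}(u_{kh}(s)) = [\alpha(s)\bee{kh}]$.

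For the uniqueness half, I would take an arbitrary $\phi\in H^1(G_n,A^*_n)$ represented by some $f\in Z^1(G_n,A^*_n)$ with $f(u_{kh}(s)) = [\alpha(s)\bee{kh}]$ for every generator $u_{kh}(s)$ of $G_n$. Then $f$ and $f_{\alpha}$ are two $1$-cocycles of $G_n$ over $A^*_n$ that agree on all the generators $u_{kh}(s)$, so by Lemma~\ref{main1} they coincide, $f = f_{\alpha}$, and therefore $\phi = \llbracket f\rrbracket = \llbracket f_{\alpha}\rrbracket$. This shows $\llbracket f_{\alpha}\rrbracket$ is the only element of $H^1(G_n,A^*_n)$ with a representative behaving as stated.

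There is no genuine obstacle here: the statement is a formal consequence of the two lemmas just quoted. The only computation that is not pure bookkeeping with the definitions of $1$-cocycle, $1$-coboundary and the class $\llbracket\,\cdot\,\rrbracket$ is the identity $u_{kh}(s)^{-1}\alpha(u_{kh}(s)) = \alpha(s)\bee{kh}$ in $\cM_n$, and this rests only on the elementary facts $\alpha(1)=0$ and $\bee{kh}^2 = 0$ when $k\neq h$.
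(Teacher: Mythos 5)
Your proof is correct and follows the paper's own route exactly: the paper likewise obtains the corollary by computing $f_{\alpha}(u_{kh}(s)) = [\alpha(s)\bee{kh}]$ (using $\alpha(1)=0$ and $\bee{kh}^2=0$) and then invoking Lemma~\ref{main1} to force any cocycle agreeing with $f_{\alpha}$ on all generators to equal $f_{\alpha}$. You have merely written out the short computation that the paper leaves implicit.
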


Most of this paper is devoted to the proof of the following.

\begin{lemma}\label{main4}
Let $n > 3$. Then for every $\phi\in H^1(G_n, A^*_n)$ there exists a derivation $\alpha_\phi\in \Der(\KK)$ and a representative $f$ of $\phi$ in $Z^1(G_n, A^*_n)$ such that $\phi=\llbracket f\rrbracket$ and $f(u_{kh}(s)) = [\alpha_\phi(s)\bee{kh}]$ for every generator $u_{kh}(s)$ of $G_n$.
\end{lemma}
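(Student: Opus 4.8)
Here is my plan for proving Lemma \ref{main4}.

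\medskip

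\noindent\textbf{Overall strategy.} Fix $\phi\in H^1(G_n,A_n^*)$ and pick an arbitrary representative $f\in Z^1(G_n,A_n^*)$, together with a representative $F\in C^1(\cM_n,G_n)$ so that $f(g)=[F(g)]$. By Lemma \ref{main1} the cocycle $f$ is pinned down by the values $f(u_{kh}(s))$, so write $F(u_{kh}(s)) = M_{kh}(s) \in \cM_n$ (defined modulo $\cI_n$). The plan is to exploit the cocycle identity $f(gg') = f(g)\cdot g' + f(g')$ against the many relations among the root elements $u_{kh}(s)$ — first the commutator (Chevalley) relations, then the relations inside each root $\SL_2$ — to force the matrices $M_{kh}(s)$ into a very rigid shape, and then to show that the ``off-diagonal $(k,h)$-entry'' of $M_{kh}(s)$ defines, after suitable normalization, an additive-and-multiplicative function of $s$, i.e. a derivation $\alpha_\phi$. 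The freedom to replace $f$ by $f + \partial v$ for a $0$-cochain $v$ (adding a coboundary) will be used repeatedly to kill the parts of $M_{kh}(s)$ that do not contribute to $\phi$; the key point is that $n>3$ gives enough indices to make these normalizations.

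\medskip

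\noindent\textbf{Step 1: normalize at a single root and at $s=1$.} Using a coboundary, arrange $f(u_{12}(1)) = 0$, hence $M_{12}(1)\in\cI_n$; more generally one can try to make $f$ vanish on a chosen set of generators of a Borel-type subgroup. Then exploit $u_{12}(s)u_{12}(t)=u_{12}(s+t)$: the cocycle relation gives $f(u_{12}(s+t)) = f(u_{12}(s))\cdot u_{12}(t) + f(u_{12}(t))$, which, written in terms of $M_{12}$, becomes a functional equation; combined with the action of $u_{12}(t)$ by conjugation on $\cM_n/\cI_n$ this will constrain the dependence of $M_{12}(s)$ on $s$.

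\medskip

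\noindent\textbf{Step 2: propagate via commutator relations.} For suitable distinct indices $k,h,\ell$ (here $n>3$ is convenient, giving a fourth index to play with), use $[u_{kh}(s),u_{h\ell}(t)] = u_{k\ell}(st)$ and $[u_{kh}(s),u_{\ell m}(t)]=1$ when the indices are ``non-composable''. Feeding these into the cocycle identity relates $M_{k\ell}(st)$ to $M_{kh}(s)$ and $M_{h\ell}(t)$ and forces most entries of each $M_{kh}(s)$ to vanish (mod $\cI_n$). After this step I expect each $f(u_{kh}(s))$ to be of the form $[\beta_{kh}(s)\bee{kh}] + (\text{diagonal correction})$ for some function $\beta_{kh}:\KK\to\KK$, and the commutator with a torus element $h_{kh}(\lambda)$ (a diagonal matrix in $G_n$) will relate the various $\beta_{kh}$ to a single function and force $\beta_{kh}(\lambda s) \cdot(\text{scalar}) = \ldots$, i.e. pin down the multiplicative behaviour $\beta(\lambda s)=\lambda\beta(s) + s\,\beta'(\lambda)$-type identity.

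\medskip

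\noindent\textbf{Step 3: identify the derivation and conclude.} From Step 1 the additivity $\beta(s+t)=\beta(s)+\beta(t)$ holds; from Step 2 the Leibniz-type identity holds; together they say that the common function $\alpha_\phi := \beta$ is a derivation of $\KK$. Then $f$ and $f_{\alpha_\phi}$ (from Lemma \ref{main2}) agree on all generators $u_{kh}(s)$ up to a coboundary, so by Corollary \ref{main3}, $\phi = \llbracket f_{\alpha_\phi}\rrbracket$, which is exactly the assertion.

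\medskip

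\noindent\textbf{Main obstacle.} The hard part is Step 2: controlling the ``diagonal corrections'' — the components of $M_{kh}(s)$ along the diagonal matrices — because these are precisely the directions along which the module $\cM_n/\cI_n$ has a nontrivial torus action and along which coboundaries act, so one must carefully track which of them can be removed by a $0$-cochain and which are genuinely constrained by the relations. Keeping the bookkeeping consistent across all pairs $(k,h)$ simultaneously, and checking that the normalizations made for one root do not spoil those made for another, is where the proof becomes laborious; this is also the only place where the hypothesis $n>3$ (rather than $n\ge 3$) is essential, since one needs a spare index to disentangle the composable and non-composable commutator relations.
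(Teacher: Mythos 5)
Your plan is essentially the paper's proof: use the commuting relations among root elements with disjoint (and partially overlapping) index sets to annihilate most entries of $F(u_{kh}(s))$, subtract coboundaries to reduce to $[\eta_{kh}(s)\bee{kh}]$, get additivity from $u_{kh}(s)u_{kh}(t)=u_{kh}(s+t)$ and the Leibniz identity from $[u_{kh}(s),u_{h\ell}(t)]=u_{k\ell}(st)$, and conclude that the common function is a derivation. Two remarks on execution. First, your ``make $f$ vanish on a chosen set of generators'' step needs more than a free choice: one cannot independently prescribe $f(u_{kh}(1))=0$ for all pairs with a single coboundary. The paper first derives the Leibniz-type identity $\eta_{k\ell}(st)=t\eta_{kh}(s)+s\eta_{h\ell}(t)$, which at $s=t=1$ gives the coherence relation $\eta_{kh}(1)=\eta_{k1}(1)-\eta_{h1}(1)$; only because of this can a single \emph{diagonal} coboundary $w_S$ with $S=\mathrm{diag}(0,\eta_{21}(1),\dots,\eta_{n1}(1))$ kill all the values at $s=1$ simultaneously. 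So the order of operations matters: the multiplicative identity must come before the normalization at $1$, not after.

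Second, and more seriously, your plan has a genuine gap at $(n,\KK)=(4,\FF_2)$. The entry-killing step in your Step 2 ultimately rests on comparisons of the form $s\,a^{kh}_{qp}(t)=a^{kh}_{pp}(t)+a^{kh}_{qq}(t)$ holding for all $s$: to conclude $a^{kh}_{qp}(t)=0$ one needs either two distinct nonzero values of $s$ (impossible over $\FF_2$) or a fifth index to bring in an extra commuting generator (impossible for $n=4$). In that single case the relations among the $u_{kh}(s)$ simply do not force the unwanted entries to vanish, and the paper has to dispose of $\SL(4,\FF_2)$ separately by an exhaustive computation on a finite presentation, verifying $Z^1=B^1$ there directly. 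Your proposal, as written, would stall at exactly this point; everything else you describe goes through for $n>4$, or $n=4$ with $|\KK|>2$.
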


Lemma \ref{main4} together with Lemma \ref{main2} and Corollary \ref{main3} yields Theorem \ref{main-main}.

\section{Preliminaries} 

Assuming that the reader is familiar with the basics of cohomology theory for groups, here we recall only what is strictly necessary for the purposes of the present paper. Given a group $X$ and a (right) $X$-module $V$ defined over a given field $\KK$, a $1$-{\em cochain} of $X$ over $V$ is a mapping $f:X\rightarrow V$ such that $f(1) = 0$. A $1$-cochain $f$ is a $1$-{\em cocycle} if
\begin{equation}\label{cochain-condition}
 \forall g_1,g_2\in X: ~~  f(g_1g_2) ~ = ~ f(g_1)\cdot g_2+f(g_2).
\end{equation}
A $1$-cochain $f$ is a $1$-{\em coboundary} if there exists $v\in V$ such that $f(g) = v-v\cdot g$ for every $g\in X$. All $1$-coboundaries are $1$-cocycles.

The $1$-cochains form an abelian group (actually a $\KK$-vector space), denoted by $C^1(X,V)$. The $1$-cocycles form a subgroup (actually a vector subspace) of $C^1(X,V)$, denoted by $Z^1(X,V)$. The $1$-coboundaries form a subgroup (actually a subspace) of $Z^1(X,V)$, denoted by $B^1(X,V)$. The quotient $H^1(X,V) := Z^1(X,V)/B^1(X,V)$ is the first cohomology group of $X$ over $V$.

\subsection{A natural embedding of $Z^1(G_n, A_n)$ in $Z^1(G_n,A^*_n)$}
We now turn back to our setting, where $X = G_n$ and $V = A^*_n\cong \cM_n/\cI_n$.  As in the Introduction, $p := \cha(\KK)$. Henceforth, if $p$ is $0$ or is positive but prime with $n$ we write $p\ndiv n$; if $p$ is positive and divides $n$ then we write $p\ddiv n$.

\begin{lemma}\label{trace}
We have $Z^1(G_n, \cM_n) = Z^1(G_n, A_n)$.  If $p\ndiv n$ then $B^1(G_n, \cM_n) = B^1(G_n, A_n)$. If $p\ddiv n$ then $B^1(G_n, \cM_n) \supsetneq B^1(G_n, A_n)$.
\end{lemma}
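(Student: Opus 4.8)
Since $A_n$ is a $G_n$-submodule of $\cM_n$, the inclusions $Z^1(G_n,A_n)\subseteq Z^1(G_n,\cM_n)$ and $B^1(G_n,A_n)\subseteq B^1(G_n,\cM_n)$ are immediate, and only the reverse containments need work. For the cocycles, the key remark is that $G_n$ fixes traces, so $\Tr\colon\cM_n\to\KK$ is a morphism of $G_n$-modules onto the \emph{trivial} module $\KK$. Hence, for $f\in Z^1(G_n,\cM_n)$, the cocycle identity \eqref{cochain-condition} collapses to $\Tr(f(g_1g_2))=\Tr(f(g_1))+\Tr(f(g_2))$, i.e. $\Tr\circ f$ is a group homomorphism from $G_n$ into the abelian group $(\KK,+)$. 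Since $n\ge 3$, the group $G_n=\SL(n,\KK)$ is perfect (a standard fact; concretely, each $u_{kh}(s)$ is a commutator of two root elements, via the Chevalley commutator relation applied with a third index $j\notin\{k,h\}$), so $\Tr\circ f$ vanishes identically and $f$ takes its values in $A_n=\ker\Tr$. This gives $Z^1(G_n,\cM_n)=Z^1(G_n,A_n)$.

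For the statements on coboundaries I would describe $B^1(G_n,\cM_n)$ as the image of the $\KK$-linear map $v\mapsto c_v$, where $c_v(g):=v-v\cdot g=v-g^{-1}vg$; its kernel is the fixed submodule $\cM_n^{G_n}$, and a short direct computation (imposing that $v$ commute with every $\bee{kh}$, $k\neq h$) shows $\cM_n^{G_n}=\cI_n$, and likewise $A_n^{G_n}=A_n\cap\cI_n$. The two cases are then separated by whether $n$ is invertible in $\KK$.

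If $p\ndiv n$ then $\Tr(I_n)=n\neq 0$, so $\cM_n=A_n\oplus\cI_n$; writing an arbitrary $v\in\cM_n$ as $v=a+tI_n$ with $a\in A_n$ and $t\in\KK$, and using that scalar matrices are $G_n$-fixed (so $c_{tI_n}=0$), we get $c_v=c_a\in B^1(G_n,A_n)$. Hence $B^1(G_n,\cM_n)\subseteq B^1(G_n,A_n)$, and the two spaces coincide. If $p\ddiv n$ then $\Tr(I_n)=0$, so $\cI_n\subseteq A_n$ and $A_n+\cI_n=A_n\subsetneq\cM_n$; I would then exhibit $v=\bee{11}$, a matrix of trace $1$. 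Then $c_v$ is a nonzero element of $B^1(G_n,\cM_n)$, and if we had $c_v=c_w$ for some $w\in A_n$, then $v-w\in\cM_n^{G_n}=\cI_n$, say $v-w=tI_n$; taking traces would yield $1=\Tr(v)=\Tr(w)+tn=0$, a contradiction. Hence $c_v\notin B^1(G_n,A_n)$ and $B^1(G_n,\cM_n)\supsetneq B^1(G_n,A_n)$.

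The whole argument is elementary; there is no single hard step, but the points deserving a little care are the identification $\cM_n^{G_n}=\cI_n$ (a direct but slightly fiddly matrix computation with the elementary matrices) and, above all, the bookkeeping of whether $n=0$ in $\KK$ when one passes between $\cM_n$ and $A_n$ through the trace map — which is exactly what the dichotomy $p\ndiv n$ versus $p\ddiv n$ records. The only place where it might be preferable to cite the literature rather than recompute is the perfectness of $\SL(n,\KK)$ for $n\ge 3$.
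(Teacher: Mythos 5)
Your argument is correct. The cocycle half (trace composed with a cocycle is a homomorphism into $(\KK,+)$, killed by perfectness of $G_n$) and the case $p\ndiv n$ (adjust a representative matrix by a scalar multiple of $I_n$ to make it traceless) coincide with the paper's proof. Where you genuinely diverge is the case $p\ddiv n$: the paper picks the coboundary $g\mapsto \bee{11}-\bee{11}\cdot g$ and then solves explicitly, generator by generator, for every matrix $m$ inducing that same coboundary, concluding that any such $m$ has trace $1$. You instead observe that the kernel of $v\mapsto c_v$ is the fixed submodule $\cM_n^{G_n}=\cI_n$, so any two matrices inducing the same coboundary differ by a scalar matrix, and a one-line trace computation ($\Tr(\bee{11})=1$ while $\Tr(w+tI_n)=0$ when $n=0$ in $\KK$) finishes the job. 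Your route is more structural and shorter; its only extra cost is the identification $\cM_n^{G_n}=\cI_n$, which you correctly reduce to commutation with the matrices $\bee{kh}$ and which is a routine computation (in effect the same computation the paper carries out, but done once in the cleanest possible setting rather than embedded in the analysis of a specific coboundary). Both proofs are complete; yours would be a legitimate, slightly tidier substitute.
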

\begin{proof}
  Let $F\in Z^1(G_n,\cM_n)$. Condition \eqref{cochain-condition}
  on $F$ implies that
  $$\Tr(F(g_1g_2)) = \Tr(F(g_1)g_2) + \Tr(F(g_2))=\Tr(F(g_1))+\Tr(F(g_2)).$$
   So, the mapping $\tau : g\in G_n \rightarrow \Tr(F(g)) \in \KK$ is a homomorphism from $G_n$ to the additive group of $\KK$. However $G_n$ is perfect. Hence $\tau$ is the trivial homomorphism: $\Tr(F(g)) = 0$ for every $g\in G_n$. The equality $Z^1(G_n, \cM_n) = Z^1(G_n, A_n)$ follows.

Obviously $B^1(G_n, A_n) \subseteq B^1(G_n, \cM_n)$. Note that if $m' = m+\lambda I_n$ then $m' - m'\cdot g = m - m\cdot g$ for every $g\in G_n$. If $p\ndiv n$ we can put $\lambda = -\Tr(m)/n$. With this choice of $\lambda$ the matrix $m'$ belongs to $A_n$. The equality $B^1(G_n, A_n) = B^1(G_n, \cM_n)$ follows. On the other hand, let $p\ddiv n$. The mapping $F_1: g\in G_n\rightarrow \bee{11}- \bee{11}\cdot g$ belongs to $B^1(G_n, \cM_n)$. We shall prove that no coboundary in $B^1(G_n, A_n)$ is equal to $F_1$.
Indeed, let $m\in \cM_n$ be any matrix such that the mapping $F_m: g\in G_n\rightarrow m-m\cdot g$ coincides with $F_1$. It is easy to check that $F_1(u_{1k}(1)) = -\bee{1k}$ for every $k > 1$. So $m - m\cdot u_{1k}(1) = -\bee{1k}$ for every $k > 1$. By similar routine computations applied to all generators $u_{h,k}(1)$ of $G_n$, we can see that this condition forces $m = tI_n - \sum_{i=2}^n\bee{ii}$ for some $t\in \KK$. Accordingly, $\Tr(m) = n(t-1)+1$. However $p\ddiv n$ by assumption. Hence $n = 0$ and $\Tr(m) = 1$. Consequently $F_m\not\in B^1(G_n,A_n)$.
\end{proof}

\begin{lemma}\label{trace bis}
Every element $f \in Z^1(G_n,A^*_n)$ admits at most one representative $F\in Z^1(G_n, A_n) = Z^1(G_n, \cM_n)$. In particular, every
$1$-coboundary $f\in B^1(G_n, A^*_n)$ admits exactly one representative
$F\in B^1(G_n, \cM_n)$.
\end{lemma}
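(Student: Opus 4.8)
The plan is to reduce the uniqueness statement to the perfectness of $G_n$, which has already been used in the proof of Lemma \ref{trace}. Suppose $F_1,F_2\in Z^1(G_n,\cM_n)$ are two representatives of the same $f\in Z^1(G_n,A^*_n)$, so that $[F_1(g)]=[F_2(g)]=f(g)$ for all $g\in G_n$. Then $D:=F_1-F_2$ is again a $1$-cocycle of $G_n$ over $\cM_n$ (as $Z^1(G_n,\cM_n)$ is a subspace of $C^1(G_n,\cM_n)$), and by construction $D(g)\in\cI_n$ for every $g$; write $D(g)=\lambda(g)I_n$ with $\lambda(g)\in\KK$. Since $G_n$ fixes $\cI_n$ pointwise, the cocycle condition \eqref{cochain-condition} applied to $D$ gives $\lambda(g_1g_2)I_n=\lambda(g_1)I_n\cdot g_2+\lambda(g_2)I_n=(\lambda(g_1)+\lambda(g_2))I_n$, i.e.\ $\lambda:G_n\to(\KK,+)$ is a group homomorphism. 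As $G_n$ is perfect and $(\KK,+)$ is abelian, $\lambda\equiv 0$; hence $D=0$ and $F_1=F_2$. This settles the first assertion, and together with Lemma \ref{trace} it gives uniqueness among representatives in $Z^1(G_n,A_n)=Z^1(G_n,\cM_n)$.

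For the ``in particular'' part, by the first assertion it only remains to exhibit, for a given $f\in B^1(G_n,A^*_n)$, a representative lying in $B^1(G_n,\cM_n)$. Write $f(g)=v-v\cdot g$ for a fixed $v=[m]\in\cM_n/\cI_n\cong A^*_n$, lift $v$ to a matrix $m\in\cM_n$, and put $F(g):=m-m\cdot g=m-g^{-1}mg$. Then $F\in B^1(G_n,\cM_n)$ by definition, and $[F(g)]=[m]-[m]\cdot g=v-v\cdot g=f(g)$, so $F$ represents $f$. (The cochain $F$ is actually independent of the chosen lift $m$, as noted in the proof of Lemma \ref{trace}, but for the statement we only need its existence.) Uniqueness of $F$ in $B^1(G_n,\cM_n)\subseteq Z^1(G_n,\cM_n)$ is then immediate from the first part.

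I do not expect a genuine obstacle here. The only point deserving care is the observation that $G_n$ acts trivially on the submodule $\cI_n$, which is precisely what converts the restricted cocycle $D$ into an additive homomorphism; after that, perfectness of $G_n$ (valid for $n\geq 3$) finishes the argument. The remaining caution is bookkeeping: one must phrase everything in terms of cochains over $\cM_n$, keep the identification $A^*_n\cong\cM_n/\cI_n$ fixed throughout, and not confuse the module $A_n$ with $\cM_n$ even though $Z^1(G_n,A_n)=Z^1(G_n,\cM_n)$ by Lemma \ref{trace}.
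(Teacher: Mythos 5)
Your proof is correct and follows essentially the same route as the paper: reduce to showing that a cocycle in $Z^1(G_n,\cM_n)$ with values in $\cI_n$ is an additive homomorphism $G_n\to(\KK,+)$, hence zero by perfectness of $G_n$, and then for the coboundary part lift $v=[m]$ to a matrix $m$ and take $F(g)=m-m\cdot g$. No issues.
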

\begin{proof}
If $F_1, F_2\in Z^1(G_n, A_n)$ represent the same element of $Z^1(G_n, A^*_n)$ then $F_1-F_2$ represents the null element of $Z^1(G_n, A^*_n)$. So, the statement of the lemma is equivalent to the following: the null element of $Z^1(G_n, A_n)$ is the unique element of $Z^1(G_n,A_n)$ which represents the null element of $Z^1(G_n,A^*_n)$. The latter claim can be proved as follows. If $F\in C^1(G_n,\cM_n)$ represents $0 \in Z^1(G_n, A^*_n)$ then there exists a mapping $\lambda:G_n\rightarrow \KK$ such that
$F(g) = \lambda(g)I_n$ for every $g\in G_n$. If moreover $F\in Z^1(G_n,\cM_n)$ ($= Z^1(G_n, A_n)$ by Lemma \ref{trace}) then condition  \eqref{cochain-condition} on $F$ implies that $\lambda$ is a homomorphism from $G_n$ to the additive group of $\KK$. However $G_n$ is perfect. Hence $\lambda$ is the trivial homomorphism. So, $F(g) = \lambda(g)I_n = 0$ for every $g\in G_n$.

The first part of the lemma is proved. Turning to the second part, let $f\in B^1(G_n, A^*_n)\subseteq Z^1(G_n, A^*_n)$. Then every representative $F$ of $f$ maps $g\in G_n$ onto $m- m\cdot g + \lambda(g)I_n$, for a given matrix $m\in \cM_n$ and a mapping $\lambda:G_n\rightarrow \KK$.
The mapping $F_0:g\in G_n \rightarrow m-m\cdot g$ still represents $f$  and is uniquely determined by the first part of the Lemma because $F_0\in B^1(G_n, \cM_n)\subseteq Z^1(G_n, \cM_n)$.
\end{proof}

Of course, every element of $Z^1(G_n, \cM_n)$ (or $B^1(G_n, \cM_n)$) represents an element of $Z^1(G_n, A^*_n)$ (respectively $B^1(G_n, A^*_n)$). Hence Lemma \ref{trace} and \ref{trace bis} immediately imply the following.

\begin{corollary}\label{trace ter}
The function which maps every $1$-cochain $F \in C^1(G_n, \cM_n)$ onto the $1$-cochain $[F]\in C^1(G_n, A^*_n)$ induces an injective homomorphism from the group $Z^1(G_n, \cM_n) = Z^1(G_n, A_n)$ into $Z^1(G_n, A^*_n)$ and isomorphically maps $B^1(G_n, \cM_n)$ onto $B^1(G_n, A^*_n)$.

In particular, if every element of $Z^1(G_n, A^*_n)$ admits a representative in the group $Z^1(G_m, \cM_n)$ then  $Z^1(G_n, A^*_n) \cong Z^1(G_n,\cM_n) =  Z^1(G_n, A_n)$ and
\[H^1(G_n, A^*_n) ~ \cong ~ H^1(G_n, \cM_n).\]
In this case, if $p\ndiv n$ then $H^1(G_n, A^*_n) \cong H^1(G_n,A_n)$, otherwise $H^1(G_n, A^*_n)$ is a proper homomorphic image of $H^1(G_n, A_n)$.
\end{corollary}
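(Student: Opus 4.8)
The plan is to derive the Corollary directly from Lemmas~\ref{trace} and \ref{trace bis}, since the map under consideration is nothing but the homomorphism induced on $1$-cochains by the $G_n$-module projection $\pi:\cM_n\to\cM_n/\cI_n\cong A^*_n$. First I would note that $F\mapsto[F]$ is $\KK$-linear and intertwines the $G_n$-actions, so it carries $C^1(G_n,\cM_n)$ to $C^1(G_n,A^*_n)$ and sends cocycles to cocycles and coboundaries to coboundaries. In particular, using the equality $Z^1(G_n,\cM_n)=Z^1(G_n,A_n)$ from Lemma~\ref{trace}, it restricts to a homomorphism $Z^1(G_n,\cM_n)\to Z^1(G_n,A^*_n)$ which maps $B^1(G_n,\cM_n)$ into $B^1(G_n,A^*_n)$.

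Next I would establish injectivity on $Z^1(G_n,\cM_n)$. If $F\in Z^1(G_n,\cM_n)$ satisfies $[F]=0$, then $F$ represents the null cocycle of $Z^1(G_n,A^*_n)$; by the first part of Lemma~\ref{trace bis} the null element of $Z^1(G_n,A_n)$ is the only cocycle in $Z^1(G_n,\cM_n)$ with this property, so $F=0$. (Alternatively, one re-runs the argument in the proof of Lemma~\ref{trace bis}: $F(g)=\lambda(g)I_n$ forces $\lambda$ to be a homomorphism from the perfect group $G_n$ to $(\KK,+)$, hence $\lambda\equiv 0$.) This gives the asserted injective homomorphism $Z^1(G_n,\cM_n)\hookrightarrow Z^1(G_n,A^*_n)$. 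Its restriction to $B^1(G_n,\cM_n)$ is injective a fortiori, and it is also surjective onto $B^1(G_n,A^*_n)$: by the second part of Lemma~\ref{trace bis} every $f\in B^1(G_n,A^*_n)$ admits a representative lying in $B^1(G_n,\cM_n)$. Hence $B^1(G_n,\cM_n)$ is mapped isomorphically onto $B^1(G_n,A^*_n)$, which proves the first assertion.

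For the ``in particular'' clause, assume every $f\in Z^1(G_n,A^*_n)$ has a representative in $Z^1(G_n,\cM_n)$. Together with the injectivity just proved, this says that $F\mapsto[F]$ is an isomorphism of $Z^1(G_n,\cM_n)$ onto $Z^1(G_n,A^*_n)$ carrying the subspace $B^1(G_n,\cM_n)$ exactly onto $B^1(G_n,A^*_n)$; passing to quotients yields $H^1(G_n,A^*_n)\cong Z^1(G_n,\cM_n)/B^1(G_n,\cM_n)=H^1(G_n,\cM_n)$, which is the displayed isomorphism. Finally I would compare $H^1(G_n,\cM_n)$ with $H^1(G_n,A_n)=Z^1(G_n,A_n)/B^1(G_n,A_n)$ via Lemma~\ref{trace}: when $p\ndiv n$ one has $B^1(G_n,\cM_n)=B^1(G_n,A_n)$, so the two quotients coincide and $H^1(G_n,A^*_n)\cong H^1(G_n,A_n)$; when $p\ddiv n$ one has $B^1(G_n,A_n)\subsetneq B^1(G_n,\cM_n)$ inside $Z^1(G_n,A_n)=Z^1(G_n,\cM_n)$, so $H^1(G_n,\cM_n)$, and therefore $H^1(G_n,A^*_n)$, is a proper homomorphic image of $H^1(G_n,A_n)$.

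I do not anticipate any real obstacle: the whole argument is bookkeeping built on the two preceding lemmas. The only point requiring a little care is to keep track of the fact that the isomorphism on $Z^1$ restricts to the isomorphism on $B^1$ --- precisely the content of the first assertion --- so that the quotient statement, and in particular the ``proper homomorphic image'' conclusion when $p\ddiv n$, follows cleanly from the strict inclusion $B^1(G_n,A_n)\subsetneq B^1(G_n,\cM_n)$ of Lemma~\ref{trace} instead of being re-derived by hand.
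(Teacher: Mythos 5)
Your proposal is correct and follows essentially the same route as the paper, which derives the corollary directly from Lemmas~\ref{trace} and~\ref{trace bis} (injectivity on $Z^1$ from the uniqueness statement, surjectivity onto $B^1(G_n,A^*_n)$ from the existence of a coboundary representative, and the $H^1$ comparison from the equality or strict inclusion of the $B^1$ groups). The paper leaves these steps implicit as ``immediate,'' whereas you spell them out; the content is the same.
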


\begin{prop}\label{Trace}
  If $p\ndiv n$, then every $f\in Z^1(G_n,A^*_n)$ admits just one representative $F\in Z^1(G_n, A_n)$.
  On the other hand, when $p\ddiv n$ then, for every $f\in Z^1(G_n, A^*_n)$, all representatives of $f$ belong to $C^1(G_n, A_n)$.
\end{prop}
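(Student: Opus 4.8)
The plan is to treat the regimes $p\ndiv n$ and $p\ddiv n$ separately; in both, the only substantial ingredient is the perfectness of $G_n$ (already exploited in Lemmas~\ref{trace} and~\ref{trace bis}) together with the identity $\Tr(I_n)=n$, which governs whether $\cI_n$ is a complement of $A_n$ in $\cM_n$ or a submodule of it.

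Suppose first that $p\ndiv n$, so $n\neq 0$ in $\KK$ and $I_n\notin A_n$; then $A_n\cap\cI_n=0$, $\cM_n=A_n\oplus\cI_n$, and the projection $\pi:\cM_n\to\cM_n$ given by $\pi(x)=x-\Tr(x)\,n^{-1}I_n$ is $G_n$-equivariant (immediate from $g^{-1}I_ng=I_n$ and the $G_n$-invariance of the trace), has image $A_n$, restricts to the identity on $A_n$, and vanishes on $\cI_n$. Given $f\in Z^1(G_n,A^*_n)$, I would pick any cochain representative $F_0\in C^1(G_n,\cM_n)$ and set $F:=\pi\circ F_0$. Then $F$ is independent of the choice of $F_0$ (because $\pi$ kills $\cI_n$), takes values in $A_n$ with $F(1)=0$, and represents $f$ since $F(g)-F_0(g)\in\cI_n$. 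That $F$ is a cocycle follows because, for all $g_1,g_2$, the matrices $F(g_1g_2)$ and $F(g_1)\cdot g_2+F(g_2)$ both lie in $A_n$ and have the same image in $\cM_n/\cI_n$ (this being the cocycle identity for $f$), hence coincide as $A_n\cap\cI_n=0$. Uniqueness of such an $F$ is Lemma~\ref{trace bis}.

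Suppose now that $p\ddiv n$, so $n=0$ in $\KK$; then $\Tr$ vanishes on $\cI_n$ and hence induces a $G_n$-equivariant linear map $\overline{\Tr}:\cM_n/\cI_n\to\KK$ onto the trivial $G_n$-module $\KK$, with kernel $A_n/\cI_n$ (note that $\cI_n\subseteq A_n$ here). For $f\in Z^1(G_n,A^*_n)$ the composite $\overline{\Tr}\circ f$ lies in $Z^1(G_n,\KK)$ and, the module being trivial, is nothing but a homomorphism from $G_n$ to $(\KK,+)$; since $G_n$ is perfect this homomorphism is trivial, so $f(g)\in\ker\overline{\Tr}=A_n/\cI_n$ for every $g$. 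Consequently, if $F$ is any representative of $f$ then $F(g)+\cI_n=f(g)\in A_n/\cI_n$, and since $\cI_n\subseteq A_n$ this forces $F(g)\in A_n$; together with $F(1)=0$ this gives $F\in C^1(G_n,A_n)$.

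I do not anticipate a genuine obstacle: each half is short. The point that deserves attention is the case $p\ddiv n$, where one must exclude that $f$ takes values in $\cM_n/\cI_n$ lying outside the submodule $A_n/\cI_n$; the argument through $\overline{\Tr}$ does precisely this, re-using the standard fact that a homomorphism from the perfect group $G_n$ to an abelian group is trivial. It is worth stressing the asymmetry: when $p\ndiv n$ the representative produced actually lies in $Z^1(G_n,A_n)$, so the ``in particular'' clause of Corollary~\ref{trace ter} applies and yields $H^1(G_n,A^*_n)\cong H^1(G_n,A_n)$, whereas when $p\ddiv n$ one only learns that every representative is a cochain valued in $A_n$ --- not that some representative is a cocycle there --- and indeed a cocycle on $A^*_n$ need not lift to one on $A_n$, which is exactly why the rest of the paper must work harder.
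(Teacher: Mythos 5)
Your proof is correct and follows essentially the same route as the paper: your projection $\pi(x)=x-\Tr(x)\,n^{-1}I_n$ is exactly the paper's formula for $F_0$ in the case $p\ndiv n$, and your argument via $\overline{\Tr}\circ f$ being a homomorphism from the perfect group $G_n$ to $(\KK,+)$ is precisely the paper's trace argument for $p\ddiv n$. The only cosmetic difference is that you verify the cocycle identity for $F$ by noting both sides lie in $A_n$ and agree modulo $\cI_n$ with $A_n\cap\cI_n=0$, where the paper checks tracelessness directly; these are the same observation.
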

\begin{proof}
Let $f\in Z^1(G_n, A^*_n)$ and let $F\in C^1(G_n, \cM_n)$ be a representative of $f$. Suppose that $p\ndiv n$.
The mapping
\begin{equation}\label{trace eq}
F_0 ~:~  g\in G_n \rightarrow F(g)- \frac{\Tr(F(g))}{n}\cdot I_n
\end{equation}
is a $1$-cochain in $C^1(G_n,A_n)$ and still represents $f$. The $1$-cochain $F_0$ is the unique representative of $f$ in $Z^1(G_n, A_n)$. Indeed $F_0(g)$ is the unique traceless matrix in $f(g)$, for every $g\in G_n$. Since
\[\begin{array}{l}
\Tr(F_0(g_1)\cdot g_2 + F_0(g_2)) = \Tr(g_2^{-1}F_0(g_1)g_2 + F_0(g_2)) = \\
= \Tr(F_0(g_1)) + \Tr(F_0(g_2)) = 0+0 = 0
\end{array}\]
and $F_0(g_1g_2)$ is the unique matrix in $f(g_1g_2)\cap A_n$, necessarily $F_0(g_1)\cdot g_2 + F_0(g_2) = F_0(g_1g_2)$.
So, $F_0\in Z^1(G_n, A_n)$. By Lemma \ref{trace}, if $F \in Z^1(G_n, \cM_n)$ represents $f$ then $F\in Z^1(G_n, A_n)$. Therefore $F = F_0$, since $F_0$ is the unique representative of $f$ in $Z^1(G_n,A_n)$. The first part of the proposition is proved.

Suppose now that $p\ddiv n$. Then $\Tr(I_n) = 0$. Hence $\Tr(F(g))$ does not depend on the choice of the representative $F$ of $f$. Accordingly, with $\Tr(f(g)) := \Tr(F(g))$, condition \eqref{cochain-condition} on $f$ implies that $\Tr(f(g_1g_2)) = \Tr(f(g_1)) + \Tr(f(g_2))$. As in the proof of Lemma \ref{trace}, we obtain that $\Tr(f(g)) = 0$ for every $g\in G_n$.
\end{proof}

In view of Proposition \ref{Trace}, when $p\ndiv n$, given a $1$-cocycle $f\in Z^1(G_n,A^*_n)$ we know how to choose a $1$-cocycle in $Z^1(G_n,A_n)$ as the best representative of $f$ (formula \eqref{trace eq} tells us how to do). In this case $Z^1(G_n, A^*_n) \cong Z^1(G_n, A_n)$ and $H^1(G_n, A^*_n) \cong H^1(G_n, A_n)$ by Corollary \ref{trace ter}, as we already know from Lemma \ref{pchar}.

In contrast, when $p \ddiv n$ there is no easy way to make that choice. In this case, the equalities which hold for $f$ hold up to scalar matrices when we replace $f$ with any of its representatives. This can be annoying. However, in many cases the following obvious remark allows us to get rid of uncomfortable scalar matrices:

\begin{itemize}
\item[(CA)] ({\bf Cleaning Argument}). Given a matrix $x =\sum_{i,j}x_{ij}\bee{ij}$, suppose that $x_{ii} = 0$ for some $i \in \{1, 2,\dots, n\}$. Then $x\in \cI_n$ only if $x = 0$.
\end{itemize}

\subsection{Some information on the generators $u_{ij}(t)$ of $G_n$}

Recall that $G_n$ is generated by the elements $u_{ij}(t) = I_n+t\bee{ij}$ ($i\neq j$ and $t\in \KK$) where $\bee{ij}$ is the $n\times n$ matrix having $1$ in position $(i,j)$ and $0$ in all the other entries. In this subsection we shall list a number of properties of the generators $u_{ij}(s)$ of $G_n$. All properties we are going to mention are well known. All of them can be obtained by repeated applications of the following basic property:
\[ \bee{ij}\bee{hk}=\begin{cases}
   \bee{ik} & \text{ if $j=h$ } \\
   0     & \text{ if $j\neq h$}.
 \end{cases} \]
We leave these deductions to the reader.
  \begin{enumerate}[{\rm (U1)}]
\item\label{U1} $u_{ij}(s)u_{ij}(t) = u_{ij}(s+t)$; hence $u_{ij}(t)^{-1} = u_{ij}(-t)$, since $u_{ij}(0) = 1$.
\item\label{U2}  $u_{ij}(t)u_{hk}(s) = \begin{cases}
I_n+t\bee{ij}+s\bee{jk}+ts\bee{ik} & \text{ if } j = h,\\
I_n+t\bee{ij}+s\bee{hk} & \text{ if } j \neq h.
\end{cases}$
\item\label{U3} $u_{pq}(s)u_{kh}(t)=u_{kh}(t)u_{pq}(s)$ if and only if  $q\neq k$
  and $h\neq p$.
\item\label{U4} If $k \neq q$ then $u_{kh}(s)u_{hq}(t)u_{kh}(s)^{-1}u_{hq}(t)^{-1} = u_{k,q}(st)$.
\item\label{U5} For $k\neq h$ and $t\neq 0$, put $w_{kh}(t):=u_{kh}(t)u_{hk}(-t^{-1})u_{kh}(t)$.

Then $w_{kh}(t) = I_n - (\bee{k,k}+\bee{h,h})+ (t\bee{k,h}-t^{-1}\bee{h,k})$.
\item\label{U6} $w_{kh}(t)^{-1}=w_{kh}(-t)$.
\item\label{U7} For $k\neq h$ and $t\neq 0$ put $h_{kh}(t):=w_{kh}(t)w_{kh}(1)^{-1}$.

Then $h_{kh}(t) = I_n+(t-1)\bee{kk}+(t^{-1}-1)\bee{hh}$.
\item\label{U8} $h_{kh}(t)h_{kh}(s) = h_{kh}(ts)$.
\item\label{U9} $w_{kh}(t)w_{kh}(s) = h_{kh}(-t/s)$.
\end{enumerate}

\section{Proof of Lemma \ref{main2}}\label{TZ} 

The mapping $F_{\alpha}: g\in G_n\rightarrow g^{-1}{\alpha}(g)\in \cM_n$ is a representative of the $1$-cochain $f_{\alpha}\in C^1(G_n, A^*_n)$ considered in Lemma \ref{main2}. It is straightforward to check that $F_{\alpha}$ satisfies condition \eqref{cochain-condition}. Hence $F_{\alpha}$ is a $1$-cocycle (in $Z^1(G_n, A_n)$, by Lemma~\ref{trace}). Accordingly, $f_{\alpha}\in Z^1(G_n, A^*_n)$. It remains to prove that $f_{\alpha}\in B^1(G_n, A^*_n)$ only if ${\alpha} = 0$.

Suppose that $f_{\alpha}\in B^1(G_n, A^*_n)$. Then there exists a matrix $m\in \cM_n$, say $m = \sum_{i,j=1}^n\bee{ij}m_{ij}$, and a mapping $\lambda: G_n\rightarrow \KK$ such that $g^{-1}{\alpha}(g) =  m - g^{-1}mg + \lambda(g)I_n$ for every $g\in G_n$. In particular, with $g = u_{kh}(s)$,
\[(I_n-s\bee{kh}){\alpha}(s)\bee{kh} ~= ~ s\bee{kh}m - sm\bee{kh} +s^2\bee{kh}m\bee{kh} + \lambda_{k,h;s}I_n\]
where we put $\lambda_{k,h;s} := \lambda(u_{kh}(s))$ for short. However $(I_n-s\bee{kh}){\alpha}(s)\bee{kh} = {\alpha}(s)\bee{kh}$ while
\[\begin{array}{ll}
s\bee{kh}m - sm\bee{kh} +s^2\bee{kh}m\bee{kh} + \lambda_{k,h;s}I_n = \\
= s(\sum_{j=1}^nm_{hj}\bee{kj} - \sum_{i=1}^nm_{ik}\bee{ih} + sm_{hk}\bee{kh}) + \lambda_{k,h,s}I_n = \\
= s(\sum_{j\neq h, k}m_{hj}\bee{kj} - \sum_{i\neq k, h}m_{ik}\bee{ih} + (sm_{hk} + m_{hh}- m_{kk})\bee{kh} + \\
~~~ + m_{hk}(\bee{kk}- \bee{hh})) + \lambda_{k,h;s}I_n.
\end{array}\]
Therefore
\begin{equation}\label{lunga}
\begin{array}{rcl}
{\alpha}(s)\bee{kh} & = & s\sum_{j\neq h, k}m_{hj}\bee{kj} - s\sum_{i\neq k, h}m_{ik}\bee{ih} +\\
 & & + s(sm_{hk} + m_{hh}- m_{kk})\bee{kh} + sm_{hk}(\bee{kk}- \bee{hh}) + \\
 & & +\lambda_{k,h;s}I_n.
\end{array}
\end{equation}
In particular, with $s = 1$,
\[\begin{array}{rcl}
0 & = & \sum_{j\neq h, k}m_{hj}\bee{kj} - \sum_{i\neq k, h}m_{ik}\bee{ih} +\\
 & & + (m_{hk} + m_{hh}- m_{kk})\bee{kh} + m_{hk}(\bee{kk}- \bee{hh}) + \lambda_{k,h;1}I_n.
\end{array}\]
It follows that $m_{hj} = 0$ for every $j\neq h, k$, $m_{ik} = 0$ for every $i \neq k, h$, $m_{hk} = m_{kk}- m_{hh}$, $\lambda_{k,h;1} = 0$ (by the Cleaning Argument) and, ultimately, $m_{h,k} =0$. Hence $m_{kk} = m_{hh}$. However this holds for every choice of $k$ and $h$ with $k\neq h$. Consequently, $m\in \cI_n$. Equation \eqref{lunga} thus amounts to ${\alpha}(s)\bee{kh} = \lambda_{k,h;s}I_n$. We have $\lambda_{k,h;s} = 0$ by the Cleaning Argument. Therefore ${\alpha}(s) = 0$ for any $s \in \KK$. In short, ${\alpha} = 0$.  The proof of Lemma \ref{main2} is complete. {\hfill$\square$\par}

  \section{Proof of Lemma~\ref{main4}} 
Lemma~\ref{main4} will be proved in a series of steps. In order to keep track of the several intermediate results, we summarize them here schematically.

We first distinguish the case $n>4$ and $\KK$ arbitrary or $n = 4$ and $\KK\not=\FF_2$ from the case $n = 4$ and $\KK=\FF_2$.
The general case will be treated in Subsection~\ref{caso n>3} while the case $(n,\KK)=(4,\FF_2)$ will be done in  Subsection~\ref{caso n=4 e K=F_2}.

In Subsection~\ref{caso n>3} we will start proving that for every $f\in Z^1(A_n^*, G_n)$ it is always possible to choose a representative $F\in C^1(A_n,G_n)$ such that, for any generator $u_{kh}(t)$ of the group $G_n$, all entries of $F(u_{kh}(t))$ are null but possibly the $(k,h)$-entry. are in row $k$. This will be done in Lemmas~\ref{s1l}, \ref{thZ1} and~\ref{rn>3}. 

With $F$ as above, from Lemma~\ref{e7} to Lemma~\ref{uni}  we prove that  $F(u_{kh}(t))=\alpha(t)\bee{kh}$, where $\alpha\colon \KK\rightarrow \KK$ is a derivation of $\KK$ not depending on $(k,h)$, as claimed in Lemma \ref{main4}.

\subsection{General case: $n>3$ and $(n, \KK) \neq (4, \FF_2)$}\label{caso n>3}

Throughout this section we assume $n>4$ and $\KK$ arbitrary or $n = 4$ and $\KK\not=\FF_2$. Given  $f\in Z^1(G_n,A_n^*)$, let $F$ be a  representative of $f$ in $C^1(G_n,A_n)$. Such an $F$ exists by Proposition~\ref{Trace}. Moreover, if $p\ndiv  n$ then $F$ is unique and belongs to $Z^1(G_n,A_n)$. In contrast, when $p\ddiv n$ we know only that $F\in C^1(G_n,A_n)$. 

In order to discuss both cases together we introduce the symbol $\equiv$, to be read as follows: if $p\ndiv n$ then $\equiv$ stands for $=$ while when $p\ndiv n$ then the symbol $\equiv$ means equivalence modulo $\cI_n$.   

Given a generator $u_{kh}(t)$ of $G_n$ we put
\[\sum_{i,j=1}^n a_{ij}^{kh}(t)\bee{ij} := F(u_{kh}(t)) \]
The following three lemmas give information on the structure of the matrix $F(u_{kh}(t)) = \sum_{i,j=1}^n a_{ij}^{kh}(t)\bee{ij}.$ 

\begin{lemma} \label{s1l}
We have
\begin{equation}\label{d}
F(u_{kh}(t)) ~ \equiv ~ \sum_{i\neq h}a^{kh}_{ki}(t)\bee{ki}+
    \sum_{j\neq k}a^{kh}_{jh}(t)\bee{jh}+a^{kh}_{kh}(t)\bee{kh}+a^{kh}_{hk}(t)\bee{hk}.
\end{equation}
\end{lemma}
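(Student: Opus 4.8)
The plan is to exploit the cocycle condition \eqref{cochain-condition} together with commutation relations among the root subgroups, applied to the representative $F \in C^1(G_n, A_n)$. The key leverage is property (U\ref{U3}): $u_{pq}(s)$ and $u_{kh}(t)$ commute whenever $q \neq k$ and $h \neq p$. Since there are many indices available (here $n \geq 4$), for a fixed pair $(k,h)$ there is an abundance of generators $u_{pq}(s)$ commuting with $u_{kh}(t)$. For each such commuting pair, the cocycle relation gives
\[
F(u_{pq}(s))\cdot u_{kh}(t) + F(u_{kh}(t)) ~=~ F(u_{pq}(s)u_{kh}(t)) ~=~ F(u_{kh}(t)u_{pq}(s)) ~=~ F(u_{kh}(t))\cdot u_{pq}(s) + F(u_{pq}(s)),
\]
which rearranges to $F(u_{kh}(t)) - F(u_{kh}(t))\cdot u_{pq}(s) \equiv F(u_{pq}(s)) - F(u_{pq}(s))\cdot u_{kh}(t)$ (modulo $\cI_n$ in the bad-characteristic case).

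First I would fix $(k,h)$ and write $M := F(u_{kh}(t)) = \sum_{i,j} a^{kh}_{ij}(t)\bee{ij}$. The conjugation action $M \mapsto M\cdot u_{pq}(s) = u_{pq}(s)^{-1} M u_{pq}(s) = (I - s\bee{pq}) M (I + s\bee{pq})$ expands, using the multiplication rule $\bee{ij}\bee{hk} = \delta_{jh}\bee{ik}$, into $M - s\bee{pq}M + sM\bee{pq} - s^2\bee{pq}M\bee{pq}$; the linear-in-$s$ terms are $s(\sum_j m_{qj}\bee{pj} - \sum_i m_{ip}\bee{iq})$ and the quadratic term is $s^2 m_{qp}\bee{pq}$. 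So $M - M\cdot u_{pq}(s)$ has a transparent form: it is supported on row $p$ and column $q$ only, plus a correction on $\bee{pq}$. Comparing this with the analogous expression for $F(u_{pq}(s))$ — which by an inductive or bootstrapping use of the same argument also has constrained support — I would extract linear equations forcing $a^{kh}_{ij}(t) = 0$ for the entries $(i,j)$ that do not lie in row $k$, column $h$, or the special positions $(k,h)$ and $(h,k)$. Concretely: take $p \notin \{k,h\}$ and $q = h$ (allowed since $q = h \neq k$ and $h \neq p$? — need $h \neq p$, yes), or take $p \notin \{k,h\}$ with $q \notin \{k\}$ suitably, so that the "new" row/column indices $p,q$ forced into play by $F(u_{pq}(s))$ are different from $k,h$; then the off-support entries of $M$ must cancel against zero and hence vanish. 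The matrices $\bee{ki}$ for $i \neq h$, $\bee{jh}$ for $j \neq k$, $\bee{kh}$, $\bee{hk}$ listed in \eqref{d} are precisely those whose position survives every such cancellation — an entry $\bee{ij}$ with $i \neq k$ and $j \neq h$ can always be "hit" by choosing $(p,q)$ to separate it from $(k,h)$, forcing $a^{kh}_{ij}(t) \equiv 0$.

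The cleanest organization is to handle the two "cross" families separately: to kill $a^{kh}_{ij}(t)$ with $i \notin \{k\}$, $j \notin \{h\}$, pick a generator $u_{pq}(s)$ with $\{p,q\} \cap \{k,h\} = \emptyset$ chosen so that the supports interact only through the entry $(i,j)$ — this is where $n \geq 4$ is essential, since one needs room to choose $p, q$ outside $\{k,h\}$ and still control the index $j$ or $i$. The Cleaning Argument (CA) will be needed in the $p \mid n$ case to discard the spurious scalar matrices $\lambda I_n$: since the relevant matrices on both sides have a zero diagonal entry somewhere (as $k \neq h$, the positions being manipulated avoid at least one diagonal slot), any $\cI_n$-ambiguity collapses to $0$, upgrading $\equiv$ to $=$ on the nose for the entries in question.

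I expect the main obstacle to be bookkeeping rather than conceptual: carefully choosing, for each target entry $(i,j)$ to be annihilated, a witness generator $u_{pq}(s)$ whose commutation with $u_{kh}(t)$ holds (so $q \neq k$, $h \neq p$) and whose own cocycle value $F(u_{pq}(s))$ does not, via its column-$q$/row-$p$ support, reintroduce a term at $(i,j)$ — this requires the index sets $\{k, h, i, j, p, q\}$ to be arranged compatibly, and verifying that $n \geq 4$ (with the single exception $(n,\KK) = (4,\FF_2)$ deferred to the other subsection, presumably because there the field is too small to separate the needed linear equations) always permits such a choice. A secondary subtlety is that $F$ need only be a $1$-cochain (not cocycle) when $p \mid n$, so each equality holds only mod $\cI_n$; one must consistently track which deductions need (CA) to be made exact, but since \eqref{d} itself is stated only up to $\equiv$, this costs nothing at this stage.
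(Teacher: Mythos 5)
Your overall route is the same as the paper's: fix $(k,h)$, run the cocycle identity over all generators $u_{pq}(s)$ commuting with $u_{kh}(t)$ by (U\ref{U3}), expand $M-M\cdot u_{pq}(s)=s\bee{pq}M-sM\bee{pq}+s^2\bee{pq}M\bee{pq}$, and compare supports. (One small simplification you miss: no ``bootstrapping'' on $F(u_{pq}(s))$ is needed, since $t(N\bee{kh}-\bee{kh}N-t\bee{kh}N\bee{kh})$ is supported on row $k$ and column $h$ for \emph{any} matrix $N$.)

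There is, however, a genuine gap at the one point where the lemma is actually delicate. First, the diagonal entries $a^{kh}_{pp}(t)$, $p\notin\{k,h\}$, are never individually forced to vanish; the comparison only shows they are all equal, and one then renormalizes the representative of $f(u_{kh}(t))$ modulo $\cI_n$ — so your blanket claim that every entry with $i\neq k$, $j\neq h$ is ``forced $\equiv 0$'' is not what happens. Second, and more seriously, for a far off-diagonal entry $a^{kh}_{qp}(t)$ with $\{p,q\}\cap\{k,h\}=\emptyset$, reading off the $(p,p)$ and $(q,q)$ coefficients of the identity $A(s,t)=B(s,t)+\lambda(s,t)I_n$ gives $sa^{kh}_{qp}(t)=\lambda(s,t)=-sa^{kh}_{qp}(t)$, which kills the entry only when $\cha(\KK)\neq 2$. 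In characteristic $2$ this is vacuous, and your appeal to the Cleaning Argument — justified by ``the positions being manipulated avoid at least one diagonal slot (as $k\neq h$)'' — is false for $n=4$: there the four indices $p,q,k,h$ exhaust every diagonal position, so $A(s,t)-B(s,t)$ has no a priori zero diagonal entry and (CA) does not apply. (For $n\geq 5$ a fifth index does supply such a slot, so there your (CA) shortcut would in fact work and even streamline the paper's argument.) The paper closes the $n=4$, $\cha(\KK)=2$ case by exploiting the $\bee{pq}$-coefficient relation $a^{kh}_{pp}(t)-a^{kh}_{qq}(t)=sa^{kh}_{qp}(t)$ for two distinct nonzero values of $s$ — which needs $|\KK|>2$ — and the $\KK=\FF_2$, $n\geq 5$ case by bringing in a third commuting generator $u_{px}(r)$ with $|\{k,h,p,q,x\}|=5$. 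Your plan contains neither device, so as written it does not establish the lemma precisely in the cases that motivate its hypothesis $(n,\KK)\neq(4,\FF_2)$ and the separate treatment of small fields.
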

\begin{proof}
  Take $p,q$ such that $1\leq p,q\leq n$ and
  $|\{p,q,k,h\}|=4$. Then, by (U\ref{U3}),  we have $u_{pq}(s)u_{kh}(t)=u_{kh}(t)u_{pq}(s)$,
so
  \begin{equation}
    \label{cco}
    f(u_{pq}(s)u_{kh}(t))=f(u_{kh}(t)u_{pq}(s)).
  \end{equation}
  By the cochain condition~\eqref{cochain-condition}
  we have
    \begin{multline*}
      F(u_{kh}(t)u_{pq}(s))\equiv u_{pq}^{-1}(s)F(u_{kh}(t))u_{pq}(s)+F(u_{pq}(s)) = \\
     =  u_{pq}(-s)F(u_{kh}(t))u_{pq}(s)+F(u_{pq}(s)) = \\
     =  (I_n-s \bee{pq})F(u_{kh}(t))(I_n+s \bee{pq})+F(u_{pq}(s)) = \\
    =  F(u_{kh}(t))-s \bee{pq}F(u_{kh}(t))+
      s F(u_{kh}(t)) \bee{pq}-s^2\bee{pq}F(u_{kh}(t))\bee{pq}+F(u_{pq}(s)).
  \end{multline*}
\begin{multline*} F(u_{kh}(t)u_{pq}(s)) \equiv \\
\equiv (F(u_{kh}(t))+F(u_{pq}(s))) + s ( F(u_{kh}(t))\bee{pq}-\bee{pq}F(u_{kh}(t))-s \bee{pq}F(u_{kh}(t))\bee{pq}).
\end{multline*}
 Analogously,
  \begin{multline*}
    F(u_{pq}(s)u_{kh}(t))\equiv u_{kh}^{-1}(t)F(u_{pq}(s))u_{kh}(t)+F(u_{kh}(t)) = \\
    = u_{kh}(-t)F(u_{pq}(s))u_{kh}(t)+F(u_{kh}(t)) = \\
   =  (I_n-t \bee{kh})F(u_{pq}(s))(I_n+t \bee{kh})+F(u_{kh}(t))  = \\
  =   F(u_{pq}(s))-t \bee{kh}F(u_{pq}(s))+t F(u_{pq}(s)) \bee{kh} -t^2\bee{kh}F(u_{pq}(s))\bee{kh}+F(u_{kh}(t));
    \end{multline*}
hence
\begin{multline*}
	    F(u_{pq}(s)u_{kh}(t)) \equiv\\
            \equiv ( F(u_{pq}(s))+F(u_{kh}(t)))+ t (F(u_{pq}(s))\bee{kh}-\bee{kh}F(u_{pq}(s))-t \bee{kh}F(u_{pq}(s))\bee{kh}).
  \end{multline*}
However $F(u_{pq}(s)u_{kh}(t))\equiv F(u_{kh}(t)u_{pq}(s))$ by \eqref{cco}. This and above imply the following:
    \begin{multline}
    \label{c}
    s(F(u_{kh}(t))\bee{pq}-\bee{pq}F(u_{kh}(t))-s \bee{pq}F(u_{kh}(t))\bee{pq}) \equiv \\
   \equiv t(F(u_{pq}(s))\bee{kh}-\bee{kh}F(u_{pq}(s))-t \bee{kh}F(u_{pq}(s))\bee{kh}).
  \end{multline}
  Define now $A(s,t)$ and $B(s,t)$ as the matrices appearing on
  respectively the left and the right hands of Equation~\eqref{c}, i.e.
 \[\begin{array}{rcl}
A(s,t) & := & s( {{F(u_{kh}(t))\bee{pq}}}-{\bee{pq}F(u_{kh}(t))}-s \bee{pq}F(u_{kh}(t))\bee{pq}),\\
B(s,t) & := & t({{F(u_{pq}(s))\bee{kh}}}-{\bee{kh}F(u_{pq}(s))}-t \bee{kh}F(u_{pq}(s))\bee{kh}). 
\end{array}\]
 Recalling that $F(u_{kh}(t)):=\sum_{i,j=1}^na^{kh}_{ij}(t)\bee{ij}$, we have
 \begin{itemize}
  \item
    $F(u_{kh}(t))\bee{pq}=\sum_{i=1}^n \bee{iq}a^{kh}_{ip}(t)$
  \item
    $\bee{pq} F(u_{kh}(t))=\sum_{i=1}^n \bee{pi}a^{kh}_{qi}(t)$
  \item
    $\bee{pq} F(u_{kh}(t)) \bee{pq}= \bee{pq} a^{kh}_{qp}(t)$
  \end{itemize}
  and, analogously
  \begin{itemize}
 \item
    $F(u_{pq}(s))\bee{kh}=\sum_{i=1}^n \bee{ih}a^{pq}_{ik}(s)$
  \item
    $\bee{kh} F(u_{pq}(s))=\sum_{i=1}^n \bee{ki}a^{pq}_{hi}(s)$
  \item
    $\bee{kh} F(u_{pq}(s)) \bee{kh}= \bee{kh} a^{pq}_{hk}(s)$.
  \end{itemize}
So,
  \begin{multline*} A(s,t) =
    s(-{\sum_{i=1}^n \bee{pi}a^{kh}_{qi}(t)} + { \sum_{i=1}^n \bee{iq}a^{kh}_{ip}(t)} - s\bee{pq} a^{kh}_{q,p}(t))=\\
    -s({\sum_{i\neq q,p,h} \bee{pi}a^{kh}_{qi}(t)+\bee{pq}a^{kh}_{qq}(t)+\bee{pp}a^{kh}_{qp}(t)+\bee{ph}a^{kh}_{qh}(t)})+\\
    s({ \sum_{i\neq p,k,q} \bee{iq} a^{kh}_{ip}(t)+ \bee{pq}a^{kh}_{pp}(t)+
        \bee{k,q}a^{kh}_{kp}(t)+\bee{qq}a^{kh}_{qp}(t)})-s^2\bee{pq}a^{kh}_{qp}(t)= \\
    s\Big({-\sum_{i\neq q,p,h}\bee{pi}a^{kh}_{qi}(t)+\bee{pp}a^{kh}_{qp}(t)-\bee{ph}a^{kh}_{qh}(t)}
    \\
    +{ \sum_{i\neq p,k,q} \bee{iq} a^{kh}_{ip}(t)+
      \bee{kq}a^{kh}_{kp}(t)+\bee{q,q}a^{kh}_{qp}(t)}\
    +({ a^{kh}_{pp}(t)}-{ a^{kh}_{qq}(t)}-sa^{kh}_{qp}(t))\bee{pq}\Big).
  \end{multline*}
  and
    \begin{multline*}
    B(s,t)=
    t  {{F(u_{pq}(s))\bee{kh}}}-{\bee{kh}F(u_{pq}(s))}-t \bee{kh}F(u_{pq}(s))\bee{kh}= \\
    t -{\sum_{i=1}^n \bee{ki}a^{pq}_{hi}(s)} + { \sum_{i=1}^n \bee{ih}a^{pq}_{ik}(s)} - t\bee{kh} a^{pq}_{hk}(s)=\\
    -t {\sum_{i\neq h,k,q} \bee{ki}a^{pq}_{hi}(s)+\bee{kh}a^{pq}_{hh}(s)+\bee{kk}a^{pq}_{hk}(s)+\bee{kq}a^{pq}_{hq}(s)} +\\
    t{ \sum_{i\neq k,p,h} \bee{ih} a^{pq}_{ik}(s)+ \bee{kh}a^{pq}_{kk}(s)+
        \bee{ph}a^{pq}_{pk}(s)+\bee{hh}a^{pq}_{hk}(s)}-t^2\bee{kh}a^{pq}_{hk}(s)= \\
    t\Big( {-\sum_{i\neq h,k,q}\bee{ki}a^{pq}_{hi}(s)+\bee{kk}a^{pq}_{hk}(s)-\bee{kq}a^{pq}_{hq}(s)}
    \\
    +{ \sum_{i\neq k,p,h} \bee{ih} a^{pq}_{ik}(s)+
      \bee{ph}a^{pq}_{pk}(s)+\bee{hh}a^{pq}_{hk}(s)}\
    +({ a^{pq}_{kk}(s)}-{ a^{pq}_{hh}(s)}-ta^{pq}_{hk}(s))\bee{kh}\Big).
  \end{multline*}
Equation ~\eqref{c} can be rewritten as $A(s,t)\equiv B(s,t)$,  that is
  \[ A(s,t) = B(s,t)+\lambda(s,t)I_n \]
  for some scalar function $\lambda(s,t):\KK\times\KK\to\KK$ (which, when $p\ndiv n$ is the null function). 
  Comparing the expressions of $A(s,t)$ and $B(s,t)$ we get the conditions
  T\ref{tab-c}.1--T\ref{tab-c}.12
  of Table~\ref{tab-c} on the coefficients of $F(u_{kh}(t))$ and $F(u_{pq}(s)).$ 

  \begin{table}
    \newcounter{condNo}
    \newcommand{\itm}{\stepcounter{condNo}\rm{T}\ref{tab-c}.\arabic{condNo}}
 \[   \setlength\extrarowheight{2pt}
\begin{array}{l|l|l|l}
& \text{element}& \text{condition on coefficient} & \text{assumptions} \\
\hline
\itm& \bee{pi} & a^{kh}_{qi}(t)=0 & i\not=q,p,h \\
 \hline
\itm & \bee{pp} & sa^{kh}_{qp}(t)=\lambda(s,t)& \forall s\\
\hline
\itm& \bee{ph} & sa^{kh}_{qh}(t)+t a^{pq}_{pk}(s)=0 & \forall s,t \\
\hline
\itm& \bee{iq} & sa^{kh}_{ip}(t)=0 & i\not=p,k,q \\
\hline
\itm& \bee{kq} & sa^{kh}_{kp}(t)+t a^{pq}_{hq}(s)=0 & \forall s,t\\
\hline
\itm& \bee{qq} & sa^{kh}_{qp}(t)=\lambda(s,t) & \forall s,t \\
\hline
\itm& \bee{pq} & sa^{kh}_{pp}(t)-sa^{kh}_{qq}(t)-s^2a^{kh}_{qp}(t)=0 & \forall s,t  \\
\hline
\itm& \bee{ki} & a^{pq}_{hi}(s)=0 & i\not=h,k,q\\
 \hline
\itm& \bee{kk} & ta^{pq}_{hk}(s)=\lambda(s,t)&\forall s,t \\
\hline
\itm&
     \bee{ih} & a^{pq}_{ik}(s)=0 & i\not=k,p,h\\
 \hline
\itm& \bee{hh} & -ta^{pq}_{hk}(s)=\lambda(s,t)&\forall s,t  \\
\hline
\itm& \bee{kh} & -ta^{pq}_{kk}(s)+ta^{pq}_{hh}(s)+t^2a^{kh}_{hk}(s)=0 & \forall s,t  \\
 \end{array}
\]
\caption{Conditions on $F(u_{kh}(t))$ and $F(u_{pq}(s))$ with $|\{k,h,p,q\}| = 4$.}
\label{tab-c}
\end{table}

If $\mathrm{char}(\KK)\neq2$, conditions T\ref{tab-c}.9 and T\ref{tab-c}.11 together directly imply $\lambda(s,t)=0$ for all $s,t$.
So, condition T\ref{tab-c}.2 gives $a_{qp}^{kh}(t)=0$ for all $t$ and
for all $q,p\not\in\{k,h\}$, $p\neq q$. Also
condition \ref{tab-c}.7 yields $a_{pp}^{kk}(t)=a_{qq}^{kk}(t)$ for all
$p,q\not\in\{k,h\}$.

We now consider the case $\mathrm{char}(\KK)=2$.
By T\ref{tab-c}.2 and T\ref{tab-c}.11, we have
$\lambda(s,t)=ta_{hk}^{pq}(s)=sa_{qp}^{kh}(t)$.
So, if we put $t=1$ we have
$a_{hk}^{pq}(s)=sa_{qp}^{kh}(1)$ whence
$ta_{hk}^{pq}(s)=st a_{qp}^{kh}(1)=sa_{qp}^{kh}(t)$
and, dividing by $s$,
$a_{qp}^{kh}(t)=t a_{qp}^{kh}(1)$.
Likewise $a_{hk}^{pq}(s)=s a_{qp}^{kh}(1)$. It follows by
\ref{tab-c}.9,
$\lambda(s,t)=ta_{hk}^{pq}(s)=(st) a_{hk}^{pq}(1)=(st) a_{qp}^{kh}(1)$. There are now two possibilities:
\begin{enumerate}[a)]
	\item
	$|\KK|>2$, $n\geq 4$.
	By~T\ref{tab-c}.7 we have
	$sa_{qp}^{kh}(t)=a_{pp}^{kh}(t)+a_{qq}^{kh}(t)$ for at least $2$ distinct
	values of $s\neq 0$. So $a_{qp}^{kh}(t)=0$ and $a_{pp}^{kh}(t)=a_{qq}^{kh}(t)$.
	The former yields by~T\ref{tab-c}.2 $\lambda(s,t)=0$.
	By applying the same argument for all $p\neq q$ with $p,q\neq k,h$ we
	get also $a_{pp}^{kh}(t)=a_{qq}^{kh}(t)$ for all $p,q\not\in\{h,k\}$.
	
	\item $\KK=\FF_2$ and $n>4$.
	If $n>4$ and $\KK=\FF_2$, then by considering three elements $u_{kh}(t)$,
	$u_{pq}(s)$ and $u_{px}(r)$ with $|\{k,h,p,q,x\}|=5$ by T\ref{tab-c}.2 we get
	$\lambda(s,t)=sa_{qp}^{kh}(t)$.
	On the other hand, by T\ref{tab-c}.4 applied to $u_{kh}(t)$ and
	$u_{px}(r)$ we have $ra_{qp}^{kh}(t)=0$ for $r=1$.
	This gives $\lambda(s,t)=0$. Now, by the same condition we also have
	$a_{pp}^{kh}(t)=a_{qq}^{kh}(t)$ for all $p,q\not\in\{h,k\}$.
\end{enumerate}
As $p,q$ vary in all possible ways, in all of the cases being
considered we have $\lambda(s,t)=0$ and
$a^{kh}_{pp}(t)=a^{kh}_{qq}(t)$. So, there exists a function
$\rho:\KK\to\KK$ such that
\begin{equation}
	\label{eZ0}
	\begin{cases}
		a^{kh}_{pp}(t)=\rho(t) & \forall p\not\in\{h,k\} \\
		a^{kh}_{pq}(t)=0 & \forall p\neq q, p,q\not\in\{h,k\}.
	\end{cases}
\end{equation}
Conditions~\eqref{eZ0} apply to all matrices $F(u_{kh}(t))\in f(u_{kh}(t))$.
On the other hand, since $f(u_{kh}(t))\in\cM_n/\cI_n$, we can
always choose as canonical representative of $f(u_{kh}(t))$ the unique matrix
therein contained for which $\rho(t)=0$, i.e. for which $a^{kh}_{ij}(t)=0$ if
$\{i,j\}\cap\{h,k\}=\emptyset$.
This gives the lemma.
\end{proof}

When $p\ddiv n$ we can choose $F$ in such a way that 
\[F(u_{kh}(t)) ~ = ~ \sum_{i\neq h}a^{kh}_{ki}(t)\bee{ki}+
    \sum_{j\neq k}a^{kh}_{jh}(t)\bee{jh}+a^{kh}_{kh}(t)\bee{kh}+a^{kh}_{hk}(t)\bee{hk}.\]
Indeed when $p\ddiv n$ we can safely modify the values of $F$ by adding scalar matrices. We can always choose such a matrix in such a way that the equivalence \eqref{d} becomes an equality. Accordingly, henceforth we assume that \eqref{d} is indeed an equality. 

\begin{lemma}
  \label{thZ1}
We have
  \begin{equation}\label{d1}
    F(u_{kh}(t))=
    \sum_{i\neq h,k}a^{kh}_{ki}(t)\bee{ki}+\sum_{j\neq h,k}a^{kh}_{jh}(t)\bee{jh}+
    a^{kh}_{kh}(t)\bee{kh}+ta^{kh}_{hh}(1)(\bee{hh}-\bee{kk}).
  \end{equation}
  Furthermore, for all $p,q$ such that $|\{h,k,p,q\}|=4$ we have
  \begin{equation}
    \begin{cases}
    sa_{qh}^{kh}(t)=-ta_{kk}^{kq}(s) \\
    sa_{kk}^{kh}(t)=-ta_{hq}^{kq}(s) \\
    sa_{kp}^{kh}(t)=-ta_{hh}^{ph}(s) \\
    sa_{hh}^{kh}(t)=-ta_{ph}^{ph}(s)
  \end{cases}
  \end{equation}
\end{lemma}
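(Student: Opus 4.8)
The idea is to push the same circle of ideas used in Lemma~\ref{s1l} one step further, now exploiting commutation relations between \emph{non-disjoint} pairs of root subgroups and using the normalisation of $F(u_{kh}(t))$ provided by Lemma~\ref{s1l} (equation~\eqref{d}, which we have agreed to treat as an honest equality). By Lemma~\ref{s1l} the matrix $F(u_{kh}(t))$ has nonzero entries only in row $k$, in column $h$, and possibly the $(h,k)$-entry; so we may write
\[
F(u_{kh}(t)) = \sum_{i\neq h,k}a^{kh}_{ki}(t)\bee{ki}+\sum_{j\neq h,k}a^{kh}_{jh}(t)\bee{jh}+a^{kh}_{kh}(t)\bee{kh}+a^{kh}_{kk}(t)\bee{kk}+a^{kh}_{hh}(t)\bee{hh}+a^{kh}_{hk}(t)\bee{hk}.
\]
The first task is to eliminate the $(h,k)$-entry and to express the two diagonal entries $a^{kh}_{kk}(t)$, $a^{kh}_{hh}(t)$ in terms of a single scalar $a^{kh}_{hh}(1)$, as in~\eqref{d1}. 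The natural tool is relation (U\ref{U1}), $u_{kh}(s)u_{kh}(t)=u_{kh}(s+t)$, fed into the cocycle condition~\eqref{cochain-condition}: this gives $F(u_{kh}(s+t)) \equiv u_{kh}(-t)F(u_{kh}(s))u_{kh}(t)+F(u_{kh}(t))$, and because all the matrices involved already have the sparse shape~\eqref{d}, conjugation by $u_{kh}(t)=I_n+t\bee{kh}$ only shuffles entries within row $k$ / column $h$ in a controlled way. Matching entries (and using the Cleaning Argument to kill the scalar ambiguity when $p\mid n$) should force $a^{kh}_{hk}(t)=0$, additivity of $t\mapsto a^{kh}_{ki}(t)$ and $t\mapsto a^{kh}_{jh}(t)$ off the $(k,h)$-slot, and the relation $a^{kh}_{kk}(t)=-a^{kh}_{hh}(t)$ together with $a^{kh}_{hh}(t)=t\,a^{kh}_{hh}(1)$ (the last coming from the $\bee{kh}$-entry, where a $t\cdot(\text{diagonal difference})$ term appears). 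Substituting gives exactly~\eqref{d1}.

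The second task is the system of four equations relating the coefficients of $F(u_{kh}(t))$ to those of $F(u_{kh'}(s))$-type matrices sharing one index. For the first two equations I would apply the cochain condition to the pair $u_{kh}(t)$, $u_{kq}(s)$ with $k,h,q$ distinct and $|\{h,k,p,q\}|=4$: by (U\ref{U3}) these commute iff the shared index $k$ is not the ``column'' of one and the ``row'' of the other — in fact $u_{kh}(t)$ and $u_{kq}(s)$ always commute, so $f(u_{kh}(t)u_{kq}(s))=f(u_{kq}(s)u_{kh}(t))$, and expanding both sides via~\eqref{cochain-condition} and~\eqref{d1} and comparing the $\bee{kh}$-, $\bee{kk}$- (or $\bee{hh}$-), entries yields the first pair $s\,a^{kh}_{qh}(t)=-t\,a^{kq}_{kk}(s)$ and $s\,a^{kh}_{kk}(t)=-t\,a^{kq}_{hq}(s)$ after the usual cleaning. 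Symmetrically, feeding the pair $u_{kh}(t)$, $u_{ph}(s)$ (which also commute, sharing the column index $h$) into~\eqref{cochain-condition} and reading off the $\bee{kh}$- and $\bee{hh}$-entries produces $s\,a^{kh}_{kp}(t)=-t\,a^{ph}_{hh}(s)$ and $s\,a^{kh}_{hh}(t)=-t\,a^{ph}_{ph}(s)$. Throughout, the point is that conjugating the already-sparse matrices~\eqref{d1} by $u_{kq}(s)=I_n+s\bee{kq}$ or $u_{ph}(s)=I_n+s\bee{ph}$ only moves a few entries, so the bookkeeping, while tedious, is entirely mechanical.

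\textbf{Main obstacle.} The genuine difficulty is not any single algebraic manipulation but the care required in the $p\mid n$ case, where every displayed equality holds only modulo $\cI_n$ and the scalar correction terms $\lambda(\cdot)I_n$ proliferate. One must verify at each stage that the Cleaning Argument (CA) actually applies — i.e. that the matrix being analysed has at least one guaranteed-zero diagonal entry — before concluding the scalar term vanishes; the sparse shape~\eqref{d1}, which always leaves some diagonal entry with index outside $\{h,k\}$ equal to zero (here $n\geq 4$ is exactly what is needed, so that such an index exists), is what makes this go through, and this is precisely why the lemma is stated for $n>3$. A secondary subtlety is ensuring that the normalisation~\eqref{d}/\eqref{d1} of $F$ chosen for one generator $u_{kh}(t)$ is compatible with that chosen for $u_{kq}(s)$ and $u_{ph}(s)$ simultaneously — but since Lemma~\ref{s1l} allows the canonical representative to be chosen independently for each generator (the only freedom being addition of scalar matrices, and Lemma~\ref{main1} guarantees the cocycle is then determined), this is not an actual conflict; it only needs to be stated cleanly.
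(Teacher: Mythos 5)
Your overall strategy (commutation relations fed into the cocycle condition, entrywise comparison, the Cleaning Argument, with $n\geq 4$ guaranteeing a spare zero diagonal entry) is the right one, and your derivation of the four displayed relations from the commuting pairs $u_{kh}(t),u_{kq}(s)$ and $u_{kh}(t),u_{ph}(s)$ is exactly what the paper does. The genuine gap is in your first task: you claim that the relation (U\ref{U1}), $u_{kh}(s)u_{kh}(t)=u_{kh}(s+t)$, forces $a^{kh}_{hk}(t)=0$, $a^{kh}_{kk}(t)=-a^{kh}_{hh}(t)$ and $a^{kh}_{hh}(t)=t\,a^{kh}_{hh}(1)$, and it does not. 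Writing $M(t):=F(u_{kh}(t))$ and expanding $M(s+t)\equiv u_{kh}(-t)M(s)u_{kh}(t)+M(t)$ with $M$ of the shape \eqref{d}, the correction terms are $t(a^{kh}_{kk}(s)-a^{kh}_{hh}(s)-t a^{kh}_{hk}(s))\bee{kh}+t a^{kh}_{hk}(s)(\bee{hh}-\bee{kk})$; comparing with the expansion obtained by swapping $s$ and $t$ and cleaning, the $(h,h)$-entry yields only $t\,a^{kh}_{hk}(s)=s\,a^{kh}_{hk}(t)$, i.e.\ $a^{kh}_{hk}(t)=t\,a^{kh}_{hk}(1)$ --- linearity, not vanishing --- and nothing in this computation relates $a^{kh}_{kk}(t)$ to $a^{kh}_{hh}(t)$ or forces $a^{kh}_{hh}(t)=t\,a^{kh}_{hh}(1)$. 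So (U\ref{U1}) alone cannot produce \eqref{d1}.

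The three missing facts come from sources that are within reach of your setup but absent from your plan. First, $a^{kh}_{kk}(t)=-a^{kh}_{hh}(t)$ follows from $\Tr(F(u_{kh}(t)))=0$ (the representative lies in $C^1(G_n,A_n)$, by choice when $p\ndiv n$ and by Proposition \ref{Trace} when $p\ddiv n$) combined with the normalization from Lemma \ref{s1l} that the remaining diagonal entries vanish; you never invoke the trace. Second, $a^{kh}_{hk}(t)=0$ falls out of the very commutator computations of your second task, namely from the $\bee{hq}$-entry of the comparison for the pair $u_{kh}(t),u_{kq}(s)$ (and $a^{ph}_{hp}(s)=0$ from the $\bee{kp}$-entry for the pair $u_{kh}(t),u_{ph}(s)$) --- entries your plan never inspects; note also that the four relations you do extract live in the $\bee{kh}$ and $\bee{kq}$ (resp.\ $\bee{ph}$) positions, not in $\bee{kk}$ or $\bee{hh}$ as you state. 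Third, the linearity $a^{kh}_{hh}(t)=t\,a^{kh}_{hh}(1)$ and $a^{kh}_{kk}(t)=t\,a^{kh}_{kk}(1)$ is obtained by specializing $s=1$ and then $s=t=1$ in those same four relations. In short, the logical order must be reversed: the commutator identities of your second task, together with the trace observation, are what prove the refined shape \eqref{d1}; Task 1 cannot precede Task 2 as an independent step based on (U\ref{U1}).
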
\begin{proof}
  First observe that $\Tr(F(u_{kh}(t))) = 0$ (by choice of $F$ when $p\ndiv n$ and by Proposition \ref{Trace} when $p\ddiv n$. So, by Lemma~\ref{s1l}
  we have $a^{kh}_{kk}(t)=-a^{kh}_{hh}(t)$.
  Take now $q\not\in\{k,h\}$. Then,
  since  $u_{kh}(t)u_{kq}(s)=u_{kq}(s)u_{kh}(t)$ by (U\ref{U3}), we also have
$F(u_{kh}(t)u_{kq}(s))=F(u_{kq}(s)u_{kh}(t)).$
  Then,
\begin{multline*}
  F(u_{kh}(t)u_{kq}(s))\equiv
  (F(u_{kh}(t))+F(u_{kq}(s)))+\\
  +sF(u_{kh}(t)\bee{kq}-s\bee{kq}F(u_{kh}(t))-s^2\bee{kq}F(u_{kh}(t))\bee{kq}
  = \\
  \\
    (F(u_{kh}(t))+F(u_{kq}(s)))+ \\
  s\left(\sum_{i\neq h} a^{kh}_{ki}(t)\bee{ki}+
    \sum_{j\neq k}a^{kh}_{jh}(t)\bee{jh}+a^{kh}_{kh}(t)\bee{kh}+
    a^{kh}_{hk}(t)\bee{hk}\right)\bee{kq}+\\
  -s\bee{kq}\left(\sum_{i\neq h} a^{kh}_{ki}(t)\bee{ki}+
    \sum_{j\neq k}a^{kh}_{jh}(t)\bee{jh}+a^{kh}_{kh}(t)\bee{kh}+
    a^{kh}_{hk}(t)\bee{hk}\right)+\\
  -s^2\bee{kq}\left(\sum_{i\neq h} a^{kh}_{ki}(t)\bee{ki}+
    \sum_{j\neq k}a^{kh}_{jh}(t)\bee{jh}+a^{kh}_{kh}(t)\bee{kh}+
    a^{kh}_{hk}(t)\bee{hk}\right)\bee{kq} =\\
   (F(u_{kh}(t))+F(u_{kq}(s)))+
  s(a^{kh}_{kk}(t)\bee{kq}+a^{kh}_{hk}(t)\bee{hq}
  -a^{kh}_{qh}(t)\bee{kh}).
\end{multline*}
So,
\[ F(u_{kh}(t)u_{kq}(s))\equiv  (F(u_{kh}(t))+F(u_{kq}(s)))+
  s(a^{kh}_{kk}(t)\bee{kq}+a^{kh}_{hk}(t)\bee{hq}
  -a^{kh}_{qh}(t)\bee{kh}).\]
Similarly,
\[ F(u_{kq}(s)u_{kh}(t)) \equiv 
  (F(u_{kh}(t))+F(u_{kq}(s)))+
  t(a^{kq}_{kk}(s)\bee{kh}+a^{kq}_{qk}(s)\bee{qh}-a^{kq}_{hq}(s)\bee{kq}).\]
The equality $F(u_{kh}(t)u_{kq}(s)) =  F(u_{kq}(s)u_{kh}(t))$ and the previous two equivalences yield $t\left(a^{kq}_{kk}(s)\bee{kh}+a^{kq}_{qk}(s)\bee{qh}-a^{kq}_{hq}(s)\bee{kq}\right) \equiv 
  s\left(a^{kh}_{kk}(t)\bee{kq}+a^{kh}_{hk}(t)\bee{hq}
  -a^{kh}_{qh}(t)\bee{kh}\right)$, whence
\begin{equation}\label{ahk}
\begin{cases}
  a^{kh}_{hk}(t)=0=a^{kq}_{qk}(s)\\
  s a^{kh}_{qh}(t)=-ta^{kq}_{kk}(s) \\
  s a^{kh}_{kk}(t)=-ta^{kq}_{hq}(s).
\end{cases}
\end{equation}
With $s=1$ we get $a^{kh}_{kk}(t)=-ta^{kq}_{hq}(1)$.
With $s=t=1$, we get $a^{kh}_{kk}(1)=-a^{kq}_{hq}(1)$, whence
\begin{equation}
  \label{akk}
  a^{kh}_{kk}(t)=ta^{kh}_{kk}(1).
\end{equation}
Arguing in a similar way with the equality $u_{kh}(t)u_{ph}(s)=u_{ph}(s)u_{kh}(t)$ and using~\eqref{ahk} we get
\[ F(u_{kh}(t)u_{ph}(s)) \equiv 
  (F(u_{kh}(t))+F(u_{ph}(s)))+
  s(a^{kh}_{kp}(t)\bee{kh}-a^{kh}_{hh}(t)\bee{ph})\]
and
\[F(u_{ph}(s)u_{kh}(t)) \equiv
  (F(u_{kh}(t))+F(u_{ph}(s)))+
  t(a^{ph}_{pk}(s)\bee{ph}-a^{ph}_{hh}(s)\bee{kh}-a^{ph}_{hp}(s)\bee{kp}).\]
So, $s[a^{kh}_{kp}(t)\bee{kh}-a^{kh}_{hh}(t)\bee{ph}] \equiv t[a^{ph}_{pk}(s)\bee{ph}-a^{ph}_{hh}(s)\bee{kh}- a^{ph}_{hp}(s)\bee{kp}]$, 
whence
\begin{equation}
  \label{akp}
  \begin{cases}
    sa^{kh}_{kp}(t)=-ta^{ph}_{hh}(s) \\
    -sa^{kh}_{hh}(t)=ta^{ph}_{pk}(s)\\
    a^{ph}_{hp}(s)=0
  \end{cases}
\end{equation}
With $s=1$ we get $a^{kh}_{hh}(t)=-ta^{ph}_{pk}(1)$ and subsequently
with $t=1$ we get $a^{kh}_{hh}(1)=-a^{ph}_{pk}(1)$, whence
\begin{equation}
  \label{ahh}
  a^{kh}_{hh}(t)=ta^{kh}_{hh}(1).
\end{equation}
Implementing \eqref{akk}, \eqref{ahh} and \eqref{ahk} in ~\eqref{d}, the lemma follows.
\end{proof}

\begin{lemma}
  \label{rn>3}
Let $\phi\in H^1(G_n,A_n^*)$. Then $\phi$ admits a representative $f_\phi\in Z^1(G_n, A^*_n)$ such that for every choice of distinct indexes $k, h \in \{1, 2,...., n\}$ there exist a function
  $\eta_{kh}:\KK\to\KK$ such that 
  \begin{equation}
    \label{dM+}
    f_\phi(u_{kh}(t))=[\eta_{kh}(t)\bee{kh}], ~~~ \forall t\in\KK.  
  \end{equation}
\end{lemma}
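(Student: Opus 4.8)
The plan is to keep the cohomology class $\phi$ fixed and replace $f$ (and the matrix representative $F$ supplied by Lemma~\ref{thZ1}) by a cohomologous $1$-cocycle whose value on each generator $u_{kh}(t)$ is, thanks to the normal form~\eqref{d1}, reduced to its $(k,h)$-entry only. Recall that a $1$-coboundary is a map $g\mapsto m-m\cdot g=m-g^{-1}mg$ with $m\in\cM_n$; since $u_{kh}(t)^{-1}=I_n-t\bee{kh}$, a routine computation yields
\[
m-u_{kh}(t)^{-1}m\,u_{kh}(t)~=~t\bee{kh}m-tm\bee{kh}+t^2\bee{kh}m\bee{kh}.
\]
The only possibly non-zero entries of the matrix on the right are the $(k,j)$-entry $tm_{hj}$ (for $j\neq h$), the $(i,h)$-entry $-tm_{ik}$ (for $i\neq k$), and --- at the two diagonal places involved --- the $(k,k)$-entry $tm_{hk}$ and the $(h,h)$-entry $-tm_{hk}$. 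So this coboundary, like $F(u_{kh}(t))$ by~\eqref{d1}, is supported on row $k$ together with column $h$.

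With $F$ as in Lemma~\ref{thZ1}, let $m:=-\sum_{i\neq j}a^{ji}_{ii}(1)\,\bee{ij}$, that is, $m_{ij}:=-a^{ji}_{ii}(1)$ for $i\neq j$, the diagonal of $m$ being arbitrary (if $p\ndiv n$ one may choose it so that $\Tr(m)=0$, keeping $m\in A_n$). Set $f_\phi(g):=f(g)-[m-g^{-1}mg]$. As $f_\phi$ differs from $f$ by a $1$-coboundary, $f_\phi\in Z^1(G_n,A^*_n)$ and $\llbracket f_\phi\rrbracket=\phi$, and a representative of $f_\phi$ in $C^1(G_n,\cM_n)$ is $F_\phi(g):=F(g)-(m-g^{-1}mg)$. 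The lemma then reduces to showing that $F_\phi(u_{kh}(t))$ vanishes outside position $(k,h)$: one simply puts $\eta_{kh}(t):=\bigl(F_\phi(u_{kh}(t))\bigr)_{kh}$ and reads off~\eqref{dM+}.

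This last check is carried out entry by entry, comparing the coboundary above with~\eqref{d1}. At $(k,k)$ and $(h,h)$ the entries of $F(u_{kh}(t))$ are, by~\eqref{d1}, $-ta^{kh}_{hh}(1)$ and $ta^{kh}_{hh}(1)$, which coincide with the coboundary entries $tm_{hk}$ and $-tm_{hk}$ precisely because $m_{hk}=-a^{kh}_{hh}(1)$. At $(k,j)$ with $j\notin\{h,k\}$: putting $s=1$ in the first relation of~\eqref{akp} gives $a^{kh}_{kj}(t)=-t\,a^{jh}_{hh}(1)$, which equals $tm_{hj}$. At $(i,h)$ with $i\notin\{h,k\}$: putting $s=1$ in the second relation of~\eqref{ahk} gives $a^{kh}_{ih}(t)=-t\,a^{ki}_{kk}(1)$, and $a^{ki}_{kk}(1)=-a^{ki}_{ii}(1)$ by~\eqref{d1} applied to $u_{ki}(1)$, so $a^{kh}_{ih}(t)=t\,a^{ki}_{ii}(1)=-tm_{ik}$. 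Thus $F(u_{kh}(t))$ and the coboundary agree at every position except possibly $(k,h)$, and since both are supported on row $k$ together with column $h$, the difference $F_\phi(u_{kh}(t))$ is concentrated at $(k,h)$, as required.

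The substance of the argument lies in Lemmas~\ref{s1l} and~\ref{thZ1}, which --- using $n>3$ and the exclusion of $(n,\KK)=(4,\FF_2)$ from this subsection --- furnished the normal form~\eqref{d1} and the linear relations~\eqref{ahk} and~\eqref{akp} among the coefficients $a^{kh}_{ij}$. The only delicate point in the present step is that these relations are exactly what permits a \emph{single} matrix $m$, namely $m_{ij}=-a^{ji}_{ii}(1)$, to clean up \emph{all} the generators $u_{kh}(t)$ at once, rather than adjusting $F$ separately on each of them; the consistency of this is precisely what the entry-by-entry check above verifies.
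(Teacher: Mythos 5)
Your proof is correct and takes essentially the same route as the paper: both modify $f$ by a single coboundary whose off-diagonal entries are read off from the diagonal coefficients $a^{ji}_{ii}(1)$ of the normalized representative from Lemma~\ref{thZ1}, and then use the relations \eqref{ahk} and \eqref{akp} to check that every entry outside position $(k,h)$ cancels. The only cosmetic difference is that you subtract the coboundary of $m_{ij}=-a^{ji}_{ii}(1)$ whereas the paper adds $w_M$ for the matrix $M$ with entries $a^{ji}_{jj}(1)=-a^{ji}_{ii}(1)$, which is the same correction.
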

\begin{proof}
    For $[M]\in A_n^*$ represented by a matrix
    $M=\sum_{i,j=1}^nm_{ij}\bee{ij}$,
  denote by $w_M:G_n\to A_n^*$ the function  
  given by
	$w_M(g):=[M-g^{-1}Mg]$.
  Observe that if $N\in [M]$ then $N=M+\alpha I_n$ and
  $w_N(g)=[N-g^{-1}Ng]=[M+\alpha I_n-g^{-1}(M+\alpha I_n)g]=
  [M-g^{-1}Mg]$; so $w_M$ depends only on $[M]$ and is
  the $1$-coboundary defined by $[M]$.
  Then
  {\allowdisplaybreaks
  \begin{multline}
  \label{dM}
     w_M(u_{kh}(t))=
    [ M-(I_n-t\bee{kh})M(I_n+t\bee{kh}) ]=\\
  =  [ M-M+t\bee{kh}M-tM\bee{kh}+t^2\bee{kh}M\bee{kh} ]= \\
    \left[ t\sum_{i,j=1}^nm_{ij}\bee{kh}\bee{ij}-
    t\sum_{i,j=1}^nm_{ij}\bee{ij}\bee{kh}+t^2\sum_{i,j=1}^n\bee{kh}\bee{ij}\bee{kh}
    \right]
    =\\ \allowbreak
   \left[ t\sum_{j=1}^nm_{hj}\bee{kj}-t\sum_{i=1}^nm_{ik}\bee{ih}+t^2m_{hk}\bee{kh}
    \right] =\\
     \left[ t\left(\sum_{\begin{subarray}{c}j=1\\
           j\neq h\end{subarray}}^nm_{hj}\bee{kj}-
       \sum_{\begin{subarray}{c}i=1\\
           i\neq k\end{subarray}}^nm_{ik}\bee{ih}\right)
     +t(tm_{hk}+m_{hh}-m_{kk})\bee{kh} \right]=\\  \allowbreak
     \left[ t\sum_{\begin{subarray}{c}j=1\\
           j\neq h,k\end{subarray}}^n(m_{hj}\bee{kj}-m_{jk}\bee{jh}) 
     +t(tm_{hk}+m_{hh}-m_{kk})\bee{kh}+tm_{hk}(\bee{kk}-\bee{hh})
     \right]. 
\end{multline}}
Let $F\in C^1(G_n,A_n)$ be such that $f_\phi=[F]$. In particular $F$ is
   a cochain representing $\phi$, i.e. $\phi=\llbracket F\rrbracket$.
   As in~\eqref{d1}, denote by $a_{hh}^{ji}(1)$ the entry in position $h,h$
   of the matrix $F(u_{ji}(1))\in A_n$.

   Choose now $M$ (equivalently, the $1$-coboundary defined by $M$) in such a way that the matrix $M$ has the following structure: $m_{ii}=0$ for all $i=1,\dots,n$
   and $m_{ij}=a_{hh}^{ji}(1)$.
      In other words
   \[ M=\sum_{\begin{subarray}{c}
         h,k=1\\
         h\neq k
       \end{subarray}
       }^n a_{hh}^{hk}(1)\bee{kh}. \]
     Clearly $\llbracket f \rrbracket=\llbracket f+w_M\rrbracket
     =\llbracket [F]+w_m\rrbracket$.
   So,
   \begin{multline}\label{e18}
      ([F]+w_M)(u_{kh}(t))=\\
     \left[\sum_{i\neq h,k}(a^{kh}_{ki}(t)+tm_{hi})\bee{ki}+
     \sum_{j\neq h,k}(a^{kh}_{jh}(t)-tm_{jk})\bee{jh}+(a_{kh}^{kh}(t)+t^2m_{hk})\bee{kh}
     \right].
    \end{multline}
    If we put $\eta^{kh}_{ki}(t):=a^{kh}_{ki}(t)+tm_{hi}$,
    $\eta^{kh}_{jh}(t):=a^{kh}_{jh}(t)-tm_{jk}$ and
    $\eta^{kh}_{hh}(t):=a_{kh}^{kh}(t)+t^2m_{hk}$ we can rewrite~\eqref{e18}
    as
  \begin{multline}
    \label{dM-}
    (f+w_M)(u_{kh}(t))=([F]+w_M)(u_{kh}(t))=\\
    \left[\sum_{i\neq h,k}\eta^{kh}_{ki}(t)\bee{ki}+
      \sum_{j\neq h,k}\eta^{kh}_{jh}(t)\bee{jh}+\eta_{kh}^{kh}(t)\bee{kh}
      \right]
  \end{multline}
  for all $k,h$.
  So,  there is a cocycle
  in the same cohomology class as $f$ of the form~\eqref{dM-}.
  From now on (with an abuse of notation) we shall assume that $F$ has been
  chosen to represents such a cocycle, i.e. that~\eqref{dM-} is the
  original form of $F$ and $[F]=f_\phi$.

  In particular, observe that $\eta_{kk}^{ij}(t)=0$ for all $i,j=1,\dots,n$.

  We can now apply
  conditions~\eqref{ahk} and~\eqref{akp}, which give that for any $i,j\neq k,h$
  \[ \begin{cases}
    \eta^{kh}_{jh}(t)=-t\eta^{kj}_{kk}(1)=0 \\
    \eta^{kh}_{ki}(t)=-t\eta^{ih}_{hh}(1)=0.
  \end{cases} \]

Define $\eta_{kh}(t):=\eta^{kh}_{kh}(t)$ for all $k\neq h$.
This gives
\[ f=[\eta_{kh}(t)\bee{kh}] \]
as required.
\end{proof}

\begin{lemma}  \label{e7}
 Let  $\eta_{k\ell}$ be the function defined in Lemma~\ref{rn>3}. The following hold:
	\begin{enumerate}[{\rm (M1)}]
\item\label{M1} $\eta_{kh}(-s)=-\eta_{kh}(s)$;
\item\label{M2}
  $\eta_{k\ell}(st)=t\eta_{kh}(s)+s\eta_{h\ell}(t), \forall s,t\in \KK.$
\end{enumerate}
  \end{lemma}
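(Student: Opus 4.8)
The plan is to exploit the multiplicative relations among the generators $u_{kh}(t)$, exactly as was done to prove Lemma~\ref{rn>3}, but now keeping track of the single surviving coefficient $\eta_{kh}(t)=\eta^{kh}_{kh}(t)$. Since $f_\phi(u_{kh}(t))=[\eta_{kh}(t)\bee{kh}]$ and we are working modulo $\cI_n$, I will use the Cleaning Argument (CA) throughout: whenever a matrix identity forces the $(k,h)$-entry of something to equal a scalar multiple of $I_n$, and that matrix has a zero diagonal entry somewhere, the scalar is $0$.

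First I would establish (M\ref{M1}). This is immediate from the cochain condition: $0=f_\phi(u_{kh}(0))=f_\phi(u_{kh}(s)u_{kh}(-s))=f_\phi(u_{kh}(s))\cdot u_{kh}(-s)+f_\phi(u_{kh}(-s))$ by (U\ref{U1}). Now $[\eta_{kh}(s)\bee{kh}]\cdot u_{kh}(-s) = [(I_n+s\bee{kh})\eta_{kh}(s)\bee{kh}(I_n-s\bee{kh})] = [\eta_{kh}(s)\bee{kh}]$ since $\bee{kh}\bee{kh}=0$ (as $k\neq h$). Hence $[\eta_{kh}(s)\bee{kh}+\eta_{kh}(-s)\bee{kh}]=0$, and by (CA) (the diagonal of $(\eta_{kh}(s)+\eta_{kh}(-s))\bee{kh}$ is zero) we get $\eta_{kh}(s)+\eta_{kh}(-s)=0$.

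For (M\ref{M2}) I would invoke the commutator identity (U\ref{U4}): for $k,h,\ell$ with $k\neq\ell$ (and $h\notin\{k,\ell\}$ so that all three generators are ``genuinely different'' in the sense needed), $u_{k\ell}(st)=u_{kh}(s)u_{h\ell}(t)u_{kh}(s)^{-1}u_{h\ell}(t)^{-1}$. Applying the cochain condition~\eqref{cochain-condition} repeatedly to the right-hand side, together with (U\ref{U1}), expresses $f_\phi(u_{k\ell}(st))$ as an alternating sum of terms of the form $f_\phi(u_{kh}(\pm s))\cdot g$ and $f_\phi(u_{h\ell}(\pm t))\cdot g$ for various $g\in G_n$ built from the four factors. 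Each such conjugate $[\eta_{kh}(\pm s)\bee{kh}]\cdot g$ and $[\eta_{h\ell}(\pm t)\bee{h\ell}]\cdot g$ I would compute explicitly via the formula $[\mu\bee{ij}]\cdot g=[g^{-1}\mu\bee{ij}g]$, using that $g$ is a product of the elementary matrices $I_n\pm s\bee{kh}$, $I_n\pm t\bee{h\ell}$ and the rule $\bee{ij}\bee{rk}=\delta_{jr}\bee{ik}$. After collecting terms, using (M\ref{M1}) to replace $\eta_{kh}(-s)$ by $-\eta_{kh}(s)$ etc., and discarding scalar-matrix contributions by (CA), the surviving equation should read $[\eta_{k\ell}(st)\bee{k\ell}] = [(t\eta_{kh}(s)+s\eta_{h\ell}(t))\bee{k\ell}]$, which by (CA) again gives the desired scalar identity $\eta_{k\ell}(st)=t\eta_{kh}(s)+s\eta_{h\ell}(t)$.

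The main obstacle I anticipate is the bookkeeping in the commutator expansion: the cochain condition applied to a four-fold product produces several terms, each conjugated by a partial product of the $u$'s, and one must verify that all the ``off-target'' matrix entries (those not in position $(k,\ell)$) cancel and that the only diagonal contributions are the scalar ones killed by (CA). A cleaner route, which I would try first, is to use the additivity relation instead: write $u_{k\ell}(st)$ via the Chevalley commutator formula in the form where only the single commutator term appears (which is exactly (U\ref{U4})), and note that $f_\phi$ restricted to the root subgroups $U_{kh},U_{h\ell},U_{k\ell}$ is governed by the already-proved constraints~\eqref{ahk}--\eqref{akp} and the fact that $\eta^{kh}_{kk}\equiv 0$. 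In either approach the essential input is that, after the dust settles, every matrix that appears has a zero on the diagonal outside position pairs where (CA) applies, so no information is lost in passing to $\cM_n/\cI_n$. Once (M\ref{M2}) is in hand the lemma is complete.
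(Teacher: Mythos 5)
Your proposal is correct and follows essentially the same route as the paper: (M1) from $u_{kh}(s)u_{kh}(-s)=I_n$ together with the cochain condition, $\bee{kh}\bee{kh}=0$ and the Cleaning Argument, and (M2) by expanding the commutator relation (U\ref{U4}) via repeated application of the cochain condition and cleaning away scalar matrices. The only difference is that you leave the bookkeeping for (M2) as a plan, whereas the paper carries it out explicitly by introducing the intermediate values $s_1(s,t)=F(u_{kh}(s)u_{h\ell}(t))$ and $s_2(s,t)=F(u_{kh}(s)u_{h\ell}(t)u_{kh}(-s))$ and computing them one step at a time; that computation confirms exactly the cancellations you anticipate.
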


\begin{proof}
  Recall that $F\in C^1(G_n,A_n)$ is such that $f_\phi=[F]$. Moreover, by Lemma \ref{rn>3} we can assume that $F(u_{h}(t)) = \eta_{kh}(t)$ for every choice of th generator $u_{kh}(t)$. 
\begin{enumerate}[(M1)]
\item  We have
  \begin{multline*}
    [0]=f(I_n)=f(u_{kh}(s)u_{kh}(-s))=\\
        \left[ (I_n+s\bee{kh})F(u_{kh}(s))(I_n-s\bee{kh})+F(u_{kh}(-s))\right]=\\
        \left[ (I_n+s\bee{kh})(\eta_{kh}(s)\bee{kh})(I_n-s\bee{kh})+\eta_{kh}(-s)\bee{kh}\right]=\\
        [\eta_{kh}(s)\bee{kh}+\eta_{kh}(-s)\bee{kh}],
      \end{multline*}
      whence $\eta_{kh}(-s)=-\eta_{kh}(s)$.
\item Let $k,h,\ell$ three distinct indexes with $1\leq k,h,\ell\leq n$
Then, by Property (U\ref{U4}),
\[  u_{kh}(s)u_{h\ell}(t)u_{kh}(-s)u_{h\ell}(-t)=u_{k\ell}(st). \]
Therefore, by the choice of $F$, we have
\begin{equation}\label{19}
 F(u_{kh}(s)u_{h\ell}(t)u_{kh}(-s)u_{h\ell}(-t))=\eta_{k\ell}(st)\bee{k\ell}.
\end{equation}
To ease the computations, put
\[
  \begin{cases}
    s_1(s,t):=F(u_{kh}(s)u_{h\ell}(t))  \\
    s_2(s,t):=F(u_{kh}(s)u_{h\ell}(t)u_{kh}(-s)) \\
  \end{cases}
\]
By direct computations and using (M\ref{M1}), we have
\begin{multline*}
  s_1(s,t)=F(u_{kh}(s)u_{h\ell}(t)) \equiv \\
  (I_n-t\bee{h\ell})F(u_{kh}(s))(I_n+t\bee{h\ell})+\eta_{h\ell}(t)=\\
  (I_n-t\bee{h\ell})\eta_{kh}(s)\bee{kh}(I_n+t\bee{h\ell})+
  \eta_{h\ell}(t)\bee{h\ell}=\\
  \eta_{kh}(s)\bee{kh}+\eta_{h\ell}(t)\bee{h\ell}+
  t\eta_{kh}(s)\bee{k\ell}.
\end{multline*}
\begin{multline*}
  s_2(s,t)=F(u_{kh}(s)u_{h\ell}(t)u_{kh}(-s))\equiv \\
  u_{kh}(s)F(u_{kh}(s)u_{h\ell}(t))u_{kh}(-s)+F(u_{kh}(-s))=\\
  (I_n+s\bee{kh})s_1(s,t)(I_n-s\bee{kh})-\eta_{kh}(s)\bee{kh}=\\
  s_1(s,t)+s\bee{kh}s_1(s,t)-ss_1(s,t)\bee{kh}-s^2\bee{kh}s_1(s,t)\bee{kh}
  -\eta_{kh}(s)\bee{kh} \equiv\\
  \eta_{kh}(s)\bee{kh}+\eta_{h\ell}(t)\bee{h\ell}+
  t\eta_{kh}(s)\bee{k\ell}+
  s\eta_{h\ell}(t)\bee{k\ell}-\eta_{kh}(s)\bee{kh}=\\
  \eta_{h\ell}(t)\bee{h\ell}+(t\eta_{kh}(s)+s\eta_{h\ell}(t))\bee{k\ell}.
\end{multline*}
By \eqref{19} and te above we have
\begin{multline*}
 \eta_{k\ell}(st)\bee{k\ell} = F(u_{kh}(s)u_{h\ell}(t)u_{kh}(-s)u_{h\ell}(-t)) \equiv \\
u_{h\ell}(t)s_2(s,t)u_{h\ell}(-t)+
 F(u_{h\ell}(-t))=\\
 (I_n+t\bee{h\ell})s_2(s,t)(I_n-t\bee{h\ell})-\eta_{h\ell}(t)\bee{h\ell}=\\
s_2(s,t)+t\bee{h\ell}s_2(s,t)-ts_2(s,t)\bee{h\ell}-t^2\bee{h\ell}s_2(s,t)\bee{h\ell}-\eta_{h\ell}(t)\bee{h\ell} \equiv\\
 (t\eta_{kh}(s)+s\eta_{h\ell}(t))\bee{k\ell}.
\end{multline*}
In conclusion $\eta_{k\ell}(st)\bee{k\ell} \equiv  (t\eta_{kh}(s)+s\eta_{h\ell}(t))\bee{k\ell}$. However this equivalence is actually an equality, by the Cleaning Principle. Property (M\ref{M2}) is proved.  
\end{enumerate}
\end{proof}

\begin{lemma}
  \label{canon-repr}
Let $\phi\in H^1(G_n,A_n^*)$. Then $\phi$ admits a representative $f_\phi\in Z^1(G_n,A^*_n)$ such that for all
  $k,h=1,\ldots,n$ with $k\neq h$ there exist a function
  $\alpha_{kh}:\KK\to\KK$ with $\alpha_{kh}(1)=0$ and such
  that 
  \begin{equation}\label{H1}
    f_{\phi}(u_{kh}(t))=[\alpha_{kh}(t)\bee{kh}],\,\,\, \forall t\in \KK. 
  \end{equation}
\end{lemma}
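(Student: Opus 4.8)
The plan is to start from the representative furnished by Lemma~\ref{rn>3} and then correct it by a coboundary coming from a diagonal matrix, chosen so as to annihilate the value at $t=1$. Concretely, let $f_\phi$ be as in Lemma~\ref{rn>3}, so that $f_\phi(u_{kh}(t))=[\eta_{kh}(t)\bee{kh}]$ for all $k\neq h$ and all $t\in\KK$, and put $c_{kh}:=\eta_{kh}(1)$. Evaluating property (M\ref{M2}) of Lemma~\ref{e7} at $s=t=1$ gives $c_{k\ell}=c_{kh}+c_{h\ell}$ whenever $k,h,\ell$ are pairwise distinct. Since $n>3$, given $k\neq h$ we may choose an index $m\notin\{k,h\}$; applying the previous relation to the triples $(k,h,m)$ and $(h,k,m)$ and subtracting yields $c_{kh}+c_{hk}=0$.

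Next I would exhibit a ``potential'' for $(c_{kh})$. Set $d_1:=0$, $d_h:=c_{1h}$ for $h\neq 1$, and let $D:=\sum_{i=1}^n d_i\bee{ii}$. The claim $c_{kh}=d_h-d_k$ holds for every $k\neq h$: it is the definition when $k=1$; it reads $c_{k1}=-c_{1k}=-d_k=d_1-d_k$ when $h=1$; and when $k,h\neq 1$ the triple $(1,k,h)$ is admissible, so $c_{kh}=c_{1h}-c_{1k}=d_h-d_k$. Now I would compute the coboundary defined by $D$: specialising~\eqref{dM} to the diagonal matrix $D$ (for which $m_{hj}=0$ for $j\neq h$, and in particular $m_{hk}=0$) gives $w_D(u_{kh}(t))=[t(d_h-d_k)\bee{kh}]$. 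Hence $f_\phi':=f_\phi-w_D$ is a cocycle in the same cohomology class $\phi$, and $f_\phi'(u_{kh}(t))=\bigl[(\eta_{kh}(t)-t(d_h-d_k))\bee{kh}\bigr]=\bigl[(\eta_{kh}(t)-t\,\eta_{kh}(1))\bee{kh}\bigr]$. Setting $\alpha_{kh}(t):=\eta_{kh}(t)-t\,\eta_{kh}(1)$ we obtain $\alpha_{kh}(1)=0$ and~\eqref{H1}, so $f_\phi'$ (renamed $f_\phi$) is the desired representative.

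There is essentially no analytic content in this argument. The only step calling for a little care is the combinatorial one --- that a ``closed'' family $(c_{kh})$ is a ``difference'' $d_h-d_k$ --- which uses the hypothesis $n>3$ only to guarantee a third index; and one should keep in mind that $w_D$ is a bona fide element of $B^1(G_n,A^*_n)$, so that $f_\phi'$ differs from $f_\phi$ by an honest coboundary rather than by an ad hoc modification of the values on the generators. Accordingly I expect this proof to be short.
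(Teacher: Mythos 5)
Your proof is correct and follows essentially the same route as the paper: both subtract the coboundary of a diagonal matrix whose entries are determined, via (M\ref{M2}) of Lemma~\ref{e7} at $s=t=1$, from the values $\eta_{kh}(1)$. In fact your formula $\alpha_{kh}(t)=\eta_{kh}(t)-t\,\eta_{kh}(1)$ is the one consistent with $w_S(u_{kh}(t))=[t(s_h-s_k)\bee{kh}]$; the paper's displayed definition $\alpha_{kh}(t):=\eta_{kh}(t)-\eta_{kh}(1)$ omits the factor $t$, evidently a typo that does not affect the statement.
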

\begin{proof}
By Lemma~\ref{rn>3} there exist a representative $f_\phi\in Z^(G_n,A^*_n)$ of $\phi$ such that for every choice of distinct indexes $k, h\in\{1, 2,..., n\}$ there exists and a function $\eta_{kh}\colon \KK\rightarrow \KK$ such that
$f_\phi(u_{kh}(t))=[\eta_{kh}(t)\bee{kh}]$ for every $t\in \KK$. 

 We claim that there exists a coboundary $w_S\in B^1(G,A_n^*)$ satisfying $w_S(u_{kh}(1))=[-\eta_{kh}(1)\bee{kh}]$ for every choice of distinct indexes $k$ and $h$.

Suppose $S$ is a diagonal $n\times n$ matrix $S=\mathrm{diag}(s_1,s_2,\dots,s_n)$ and define
 \[w_S\colon G_n\rightarrow A_n^*,\,\,w_S(u_{kh}(t))=[t(s_h-s_k)\bee{kh}].\]
 Note that, as in the the first part of the proof of Lemma~\ref{rn>3}, the coboundary $w_S$ is well defined and does not depend on the representative $S$ of $[S]$ in $\cM_n/\cI_n.$

	Put $S:=\mathrm{diag}(0,\eta_{2,1}(1),\eta_{3,1}(1)\dots, \eta_{n,1}(1))$, i.e. $s_1=0$ and $s_k=\eta_{k,1}(1)$ for $k>1.$
 By (M\ref{M2}) of Lemma~\ref{e7} with $\ell=1$, we have $s_k-s_h=\eta_{k,1}(1)-\eta_{h,1}(1)=\eta_{kh}(1)$, 
 whence $w_S(u_{kh}(1))=[(s_h-s_k)\bee{kh}]=[-\eta_{kh}(1) \bee{kh}]$ and the claim is proved. Put
\begin{equation}\label{alpha}
\alpha_{kh}(t):=\eta_{kh}(t)-\eta_{kh}(1)
\end{equation}
 and $f_{\phi,S}:= f_\phi+w_S$; so $f_{\phi,S}\in Z^1(G_n,A^*_n)$ still represents $\phi$ and $f_{\phi,S}(u_{kh}(t))=[\alpha_{kh}(t)\bee{kh}]$.
According to definition \eqref{alpha}, we have $\alpha_{kh}(1) = 0$. 
 \end{proof}

\begin{lemma}
  \label{add}
The function $\alpha_{kh}\colon \KK\rightarrow \KK$ introduced in Lemma~\ref{canon-repr} is additive:
  \[ \alpha_{kh}(s+t)=\alpha_{kh}(s)+\alpha_{kh}(t),\,\,\forall s,t\in\KK. \]
\end{lemma}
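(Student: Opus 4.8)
The plan is to read off additivity from the single multiplicative relation $u_{kh}(s)u_{kh}(t)=u_{kh}(s+t)$ of (U\ref{U1}), combined with the cocycle identity \eqref{cochain-condition} for the representative $f_\phi\in Z^1(G_n,A^*_n)$ furnished by Lemma~\ref{canon-repr}, and then to discard the ambiguity modulo $\cI_n$ by means of the Cleaning Argument (CA).

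Concretely, I would apply \eqref{cochain-condition} with $g_1=u_{kh}(s)$ and $g_2=u_{kh}(t)$, which (using (U\ref{U1})) gives, in $A^*_n\cong\cM_n/\cI_n$,
\[ f_\phi(u_{kh}(s+t)) ~=~ f_\phi(u_{kh}(s))\cdot u_{kh}(t)+f_\phi(u_{kh}(t)). \]
Substituting $f_\phi(u_{kh}(r))=[\alpha_{kh}(r)\bee{kh}]$ from \eqref{H1}, the only quantity left to evaluate is the module action of $u_{kh}(t)$ on $[\alpha_{kh}(s)\bee{kh}]$, namely $[(I_n-t\bee{kh})\,\alpha_{kh}(s)\bee{kh}\,(I_n+t\bee{kh})]$. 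Since $k\neq h$ we have $\bee{kh}\bee{kh}=0$, hence $(I_n-t\bee{kh})\bee{kh}(I_n+t\bee{kh})=\bee{kh}$, so this action is trivial: $[\alpha_{kh}(s)\bee{kh}]\cdot u_{kh}(t)=[\alpha_{kh}(s)\bee{kh}]$. The displayed identity therefore collapses to
\[ [\alpha_{kh}(s+t)\bee{kh}] ~=~ [(\alpha_{kh}(s)+\alpha_{kh}(t))\bee{kh}], \]
i.e. $\bigl(\alpha_{kh}(s+t)-\alpha_{kh}(s)-\alpha_{kh}(t)\bigr)\bee{kh}\in\cI_n$.

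To finish I would invoke (CA): the matrix $\bee{kh}$ has all diagonal entries equal to $0$, so any scalar multiple of it lying in $\cI_n$ must be the zero matrix; this forces $\alpha_{kh}(s+t)-\alpha_{kh}(s)-\alpha_{kh}(t)=0$, which is the claimed additivity. I do not expect any genuine obstacle here: the argument is a one-line application of the cocycle relation, and the only points demanding a little care are the (routine) verification that the conjugation action of $u_{kh}(t)$ fixes $\bee{kh}$ because $\bee{kh}^2=0$, and the passage from an equality in $\cM_n/\cI_n$ to an honest equality of scalars, which is precisely what (CA) provides.
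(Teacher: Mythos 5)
Your proposal is correct and follows essentially the same route as the paper: apply the cocycle identity to $u_{kh}(s)u_{kh}(t)=u_{kh}(s+t)$, observe that conjugation by $u_{kh}(t)$ fixes $\bee{kh}$ because $\bee{kh}^2=0$, and then remove the ambiguity modulo $\cI_n$ via the Cleaning Argument. The only (immaterial) difference is that you compute directly in $\cM_n/\cI_n$ with the cocycle $f_\phi$, whereas the paper works with a matrix-valued representative $F$ and the relation $\equiv$ before invoking the same Cleaning Argument.
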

\begin{proof}
We can choose a representative $F\in C^1(G_n,A_n)$ of $\phi$ in such a way that for every choice of distinct $k, h\in\{1,2,..., n\}$ an every $t\in \KK$ we have $F(u_{kh}(t))=\alpha_{kh}(t)\bee{kh}$. Hence $F(u_{kh}(1)) = 0$, because $\alpha_{kh}(1)=0$.
Since  $u_{kh}(s)u_{kh}(t)= u_{kh}(s+t)$, we have
\begin{multline}
 \alpha_{kh}(s+t)\bee{kh}=F(u_{kh}(s+t))=F(u_{kh}(s)u_{kh}(t)) \equiv \\
  u_{kh}(-t)F(u_{kh}(s))u_{kh}(t)+F(u_{kh}(t))= \\
  (I_n-t\bee{kh})(\alpha_{kh}(s)\bee{kh})(I_n+t\bee{kh})+\alpha_{kh}(t)\bee{kh}=\\
  (\alpha_{kh}(s)+\alpha_{kh}(t))\bee{k,h}.
\end{multline}
So $\alpha_{kh}(s+t)\bee{kh} \equiv (\alpha_{kh}(s)+\alpha_{kh}(t))\bee{k,h}$, namely  $\alpha_{kh}(s+t)\bee{kh} = (\alpha_{kh}(s)+\alpha_{kh}(t))\bee{k,h}$ by the Cleaning Principle.  \end{proof}

The next lemma completes the proof of Lemma \ref{main4} under the assumption that ($n\geq 4$ and) either $\KK \neq \FF_2$ or $n > 4$. 

\begin{lemma}
  \label{uni}
Let $\phi\in H^1(G_n,A_n^*)$. Then there exists a function
  $\alpha_{\phi}:\KK\to\KK$ with $\alpha_{\phi}(1)=0$ and a representative $f_\phi\in Z^1(G_n,A_n^*)$ of $\phi$ such
  \begin{equation}\label{H1-nuova}
    f_\phi(u_{kh}(t))=[\alpha_{\phi}(t)\bee{kh}],\,\,\, \forall k,h\in\{1,2,...,n\}, ~k\neq h, ~ \forall t\in \KK. 
  \end{equation}
Moreover,  $\alpha_{\phi}$ is a derivation of $\KK.$
   \end{lemma}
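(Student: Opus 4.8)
The plan is to promote the family of additive maps $\alpha_{kh}$ obtained in Lemma~\ref{canon-repr} and Lemma~\ref{add} to a single derivation. First I would record the immediate consequence of relation (M\ref{M2}) of Lemma~\ref{e7} combined with the normalization $\alpha_{kh}(1)=0$ from Lemma~\ref{canon-repr}. Writing $\alpha_{kh}(t)=\eta_{kh}(t)-\eta_{kh}(1)$ and substituting into $\eta_{k\ell}(st)=t\,\eta_{kh}(s)+s\,\eta_{h\ell}(t)$ (valid whenever $k,h,\ell$ are pairwise distinct), I expect to obtain, after collecting the constant terms via $s=t=1$, an identity of the shape
\[
\alpha_{k\ell}(st)=t\,\alpha_{kh}(s)+s\,\alpha_{h\ell}(t)+(\text{a bilinear-in-}(s{-}1,t{-}1)\text{-type correction that vanishes}).
\]
Concretely, the clean statement I am aiming for is $\alpha_{k\ell}(st)=t\,\alpha_{kh}(s)+s\,\alpha_{h\ell}(t)$ for all pairwise distinct $k,h,\ell$ and all $s,t\in\KK$, and then, setting $s=t=1$ and using additivity (Lemma~\ref{add}) to handle the leftover, I would deduce $\alpha_{k\ell}=\alpha_{kh}+\alpha_{h\ell}$ as functions, i.e. a cocycle-type relation among the indices. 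This is the main bookkeeping step and I expect it to be routine but slightly delicate because of the shift by $\eta_{kh}(1)$; the Cleaning Argument~(CA) is what guarantees the equivalences mod $\cI_n$ are genuine equalities.

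Next I would show the $\alpha_{kh}$ are all equal. Because $n\geq 4$, given any two ordered pairs $(k,h)$ and $(k',h')$ of distinct indices I can find a third index $\ell\notin\{k,h\}$ and also $\ell'\notin\{k',h'\}$ and chain the relations $\alpha_{k\ell}=\alpha_{kh}+\alpha_{h\ell}$ etc. In particular, applying the index-cocycle relation twice, I get $\alpha_{kh}=\alpha_{k\ell}-\alpha_{h\ell}=-\alpha_{\ell k}-\alpha_{h\ell}$ and, using (M\ref{M1})/antisymmetry considerations together with the analogous relation for the reversed pairs, I should be able to collapse everything to a single function $\alpha_\phi:=\alpha_{12}$, independent of $(k,h)$. (The hypothesis $n>3$, i.e. the existence of a fourth index to play the role of $\ell$ not interfering with a given pair, is exactly what makes this collapsing possible — this is where the $(4,\FF_2)$ case genuinely differs and is postponed to Subsection~\ref{caso n=4 e K=F_2}.) Once $\alpha_{kh}=\alpha_\phi$ for all $k\neq h$, the representative $f_\phi$ produced in Lemma~\ref{canon-repr} already satisfies $f_\phi(u_{kh}(t))=[\alpha_\phi(t)\bee{kh}]$, which is \eqref{H1-nuova}, and $\alpha_\phi(1)=0$.

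Finally I would verify that $\alpha_\phi$ is a derivation. Additivity (D\ref{D1}) is Lemma~\ref{add}. For the Leibniz rule (D\ref{D2}), I substitute $\alpha_{kh}=\alpha_{h\ell}=\alpha_{k\ell}=\alpha_\phi$ into the multiplicativity relation $\alpha_{k\ell}(st)=t\,\alpha_{kh}(s)+s\,\alpha_{h\ell}(t)$ obtained above (legitimate since $n\geq 4$ provides the needed third index $\ell$), yielding directly $\alpha_\phi(st)=t\,\alpha_\phi(s)+s\,\alpha_\phi(t)$ for all $s,t\in\KK$. Hence $\alpha_\phi\in\Der(\KK)$. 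The main obstacle I anticipate is not any single computation but keeping the equivalences-versus-equalities distinction straight when $p\ddiv n$: every manipulation of $F$ is only valid modulo $\cI_n$, and at each stage where I want a true equality of scalar coefficients I must invoke~(CA) (noting that the matrices in question, being multiples of $\bee{kh}$ with $k\neq h$, have a zero diagonal entry). Once that discipline is maintained, the derivation property falls out of the index-independence plus the multiplicativity identity. Combining with Lemma~\ref{main2} and Corollary~\ref{main3}, this $\alpha_\phi$ is the derivation whose cocycle class is $\phi$, completing Lemma~\ref{main4} in the general case.
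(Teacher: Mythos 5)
Your overall strategy is the paper's: use the multiplicativity relation (M\ref{M2}) together with the normalisation $\alpha_{kh}(1)=0$ to show that all the $\alpha_{kh}$ coincide, and then read off the Leibniz rule from that same relation, additivity being Lemma~\ref{add}. However, two of your intermediate claims fail as written. First, if you normalise by $\alpha_{kh}(t)=\eta_{kh}(t)-\eta_{kh}(1)$ and substitute into $\eta_{k\ell}(st)=t\,\eta_{kh}(s)+s\,\eta_{h\ell}(t)$, the leftover term is $(t-1)\eta_{kh}(1)+(s-1)\eta_{h\ell}(1)$, which is not of bilinear type in $(s-1,t-1)$ and does not vanish unless $\eta_{kh}(1)=\eta_{h\ell}(1)=0$. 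The clean identity $\alpha_{k\ell}(st)=t\,\alpha_{kh}(s)+s\,\alpha_{h\ell}(t)$ is nevertheless true, but for a different reason: the coboundary $w_S$ of Lemma~\ref{canon-repr} satisfies $w_S(u_{kh}(t))=[t(s_h-s_k)\bee{kh}]=[-t\eta_{kh}(1)\bee{kh}]$, so the normalisation actually effected is $\alpha_{kh}(t)=\eta_{kh}(t)-t\,\eta_{kh}(1)$ (a shift linear in $t$, not a constant shift), and with that the correction cancels identically. Equivalently, and more simply, the new representative $f_\phi+w_S$ is again of the form covered by Lemma~\ref{rn>3}, so Lemma~\ref{e7}(M\ref{M2}) applies to it verbatim with $\eta$ replaced by $\alpha$; this is in effect what the paper does.

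Second, the ``index-cocycle relation'' $\alpha_{k\ell}=\alpha_{kh}+\alpha_{h\ell}$ is not what the specialisation yields, and it is incompatible with your own conclusion: once all the $\alpha$'s are equal it would give $\alpha_\phi=2\alpha_\phi$, hence $\alpha_\phi=0$ whenever $\cha\KK\neq 2$. The correct (and easier) deduction is to put $s=1$ in $\alpha_{k\ell}(st)=t\,\alpha_{kh}(s)+s\,\alpha_{h\ell}(t)$, obtaining $\alpha_{k\ell}=\alpha_{h\ell}$ since $\alpha_{kh}(1)=0$, and then $t=1$, obtaining $\alpha_{k\ell}=\alpha_{kh}$; as $n>3$ lets $k,h,\ell$ range over all triples, all the $\alpha_{kh}$ are equal outright, with no antisymmetry bookkeeping. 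With these two repairs your argument coincides with the paper's proof; the remaining steps (the Leibniz rule by substituting the common $\alpha_\phi$ into the multiplicativity relation, additivity from Lemma~\ref{add}, and $\alpha_\phi(1)=0$ from Lemma~\ref{canon-repr}) are correct as you wrote them.
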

\begin{proof}
By Lemma~\ref{canon-repr} and Lemma \ref{add} we already know that $\phi$ admits a representative $f_\phi\in Z^1(G_n,A^*_n)$ such that for every choice of distinct indexes $k$ and $h$ there exists an additive function 
  $\alpha_{kh}:\KK\to\KK$ such that $\alpha_{kh}(1)=0$ and
  \[ f_\phi(u_{kh}(t))=[\alpha_{kh}(t)\bee{kh}],\,\,\, \forall t\in \KK.\]
We shall now prove that $\alpha_{kh}= \alpha_{\ell m}$ for every choice of $k,h,\ell, m\in\{1,2,..., n\}$, $k\neq h$, $\ell\neq m$. 
Let $k,h,\ell$ be pairwise distinct indexes. By (M\ref{M2}) of Lemma~\ref{e7} with $s=1$, we have
\[ \alpha_{k\ell}(t)=t\alpha_{kh}(1)+1\alpha_{h\ell}(t)=\alpha_{h\ell}(t)\]
and, likewise for $t=1$,
\[ \alpha_{k\ell}(s)=1\alpha_{kh}(s)+s\alpha_{h\ell}(1)=\alpha_{kh}(s). \]
This means that $\alpha_{kh}=\alpha_{k\ell}=\alpha_{h\ell}$.
As $k,h,\ell$ vary in all possible ways in ${\{1,\dots, n\}}\choose {3}$ and $n > 3$ by assumption, this implies that there is
a function $\alpha_{\phi}:\KK\to\KK$ such that
$\alpha_{\phi}(s):=\alpha_{kh}(s)$ for all $k\neq h$.

By Lemma~\ref{canon-repr},  since $\alpha_{kh}(1)=0$, we have
$\alpha_{\phi}(1)=0$. By Lemma~\ref{add} we have $\alpha_{\phi}(s+t)=\alpha_{\phi}(s)+\alpha_{\phi}(t)$
and by (M\ref{M2}) of Lemma~\ref{e7} we have
$\alpha_{\phi}(xy)=\alpha_{\phi}(x)y+x\alpha_{\phi}(y)$.
So $\alpha_{\phi}$ satisfies conditions (D\ref{D1}) and (D\ref{D2}) of Section~\ref{derivazioni}, i.e. it is
a derivation of $\KK$.
\end{proof}

\subsection{Case $\KK=\FF_2$ and $n = 4$.}\label{caso n=4 e K=F_2}

When $\KK = \FF_2$ and $n = 4$ the conditions of Table \ref{tab-c} fail to force sufficiently many entries of $F(u_{kh}(t))$ to be null. However in this case it is possible to perform an exhaustive computer search as follows.
The group $\SL(4,\FF_2)$ is $2$-generated.
Using~\cite{GAP4},
\lstset{language=gap,
	basicstyle=\small}
\begin{lstlisting}
n:=4; q:=2;
G:=SL(n,q);
ARing:=GF(q)^[n,n];
Gens:=GeneratorsOfGroup(G);
\end{lstlisting}
we see that the two matrices
\[ g_1:=\begin{pmatrix}
  1 & 1 & 0 & 0 \\
  0 & 1 & 0 & 0 \\
  0 & 0 & 1 & 0 \\
  0 & 0 & 0 & 1 \\
\end{pmatrix},\qquad
g_2:=\begin{pmatrix}
  0 & 0 & 0 & 1 \\
  1 & 0 & 0 & 0 \\
  0 & 1 & 0 & 0 \\
  0 & 0 & 1 & 0
\end{pmatrix} \]
are a set of generators for $\SL(4,\FF_4)$.
It is now possible to express a presentation of the group in
terms of $g_1$ and $g_2$ as follows:
\begin{lstlisting}
IsoG:=IsomorphismFpGroupByGenerators(G,Gens);
FpG:=Image(IsoG);
P:=PresentationViaCosetTable(FpG);
SimplifyPresentation(P);
f1:=Gens[1];
f2:=Gens[2];
Gp:=FpGroupPresentation(P);
R:=RelatorsOfFpGroup(Gp);
TForm:=List(R,x->TietzeWordAbstractWord(x));
\end{lstlisting}
This gives the following $8$ relators in terms of the
elements $g_1$ and $g_2$:
\[ g_1^2,\quad g_2^4,\quad (g_1g_2^{-2})^4,\quad (g_1g_2g_1g_2^{-1})^4,
 \quad (g_1g_2)^{15},
\]
\[ g_1g_2g_1g_2^{-1}g_1g_2^2g_1g_2^{-1}(g_1g_2g_1g_2^{-2}g_1g_2)^2g_1g_2g_1g_2^{-2},
\]
\[ g_1g_2^{-1}g_1g_2^2g_1g_2^{-1}(g_1g_2^-2g_1g_2)^4g_1g_2^{-2}, \]
\[  g_2^{-1}g_1g_2^{-1}(g_1g_2)^4(g_1g_2^{-1})^4(g_1g_2)^3(g_2g_1g_2^{-1}g_1)^2g_2^{-1}g_1.
\]
A cocycle $f\in Z^1(\SL(4,2),A^*)$ is uniquely determined by the values of $f(g_1)$ and $f(g_2)$ in $A^*$.
Clearly $f$ must also satisfy the condition that for any relator $r$, $f(r)=f(1)=0$.

The function {\tt fProd} has as input three lists: ${\tt elts}=( g_1, g_2,\dots, g_k)$ a list of elements of $\SL(4,\FF_4)$,
${\tt f}:=( f(g_1), f(g_2),\dots f(g_k) )$ a list corresponding to the values of a cocycle $f$ on the elements of {\tt elts} and
${\tt Idx}:=(i_1,i_2)$, a pair of indexes $i_1,i_2$. It returns the value of $f(g_{i_1}g_{i_2})$.
\begin{lstlisting}
fInv:=function(el,f)
# (el:element, f:cocycle)->f(el^-1)
    return -el*f*el^-1;
end;

fProd:=function(elts,f,Ix)
    local lElts,lF,i,Idx;
    Idx:=ShallowCopy(Ix);
    lElts:=ShallowCopy(elts);
    lF:=ShallowCopy(f);
    for i in [1..Size(Idx)] do
        if Idx[i]<0 then
            Idx[i]:=-Idx[i];
            lF[Idx[i]]:=fInv(lElts[Idx[i]],lF[Idx[i]]);
            lElts[Idx[i]]:=lElts[Idx[i]]^-1;
        fi;
    od;
    return lElts[Idx[2]]^(-1)*lF[Idx[1]]*lElts[Idx[2]]+lF[Idx[2]];
end;
\end{lstlisting}

The function {\tt ApplyRed} has three inputs: a list {\tt Mats} of matrices of $\SL(4,2)$, the values
{\tt Vals} of a cocycle $f$ takes on the elements of {\tt Mats} and a list {\tt Strng} which describes
how the matrices in {\tt Mats} must be multiplied together. Its output is the value of the cocycle
in the matrix obtained by applying {\tt Strng} to {\tt Mats}.
\begin{lstlisting}
myFilter:=function(LST,t)
    if t>0 then
        return LST[t];
    else
        return LST[-t]^-1;
    fi;
end;

ApplyRed:=function(Mats,Vals, Strng )
   local s,LMats,LVals,LStrng,
          nMat,nVal;
    LMats:=ShallowCopy(Mats);
    LVals:=ShallowCopy(Vals);
    LStrng:=ShallowCopy(Strng);    
    s:=Length(LStrng);
    if s=1 then
        return Vals[Strng[1]];
    fi;
    nMat:=myFilter(LMats,Strng[s-1])*myFilter(LMats,Strng[s]);
    nVal:=fProd(LMats,LVals,Strng{[s-1,s]});
    Add(LMats,nMat);
    Add(LVals,nVal);
    LStrng[s-1]:=Size(LMats);
    LStrng:=LStrng{[1..s-1]};
    return ApplyRed(LMats,LVals,LStrng);
end;
\end{lstlisting}
It is now possible to enumerate all possible cocycles in $Z^1(\SL(2,\FF_4),A^*)$
by considering all mappings $f:\{g_1,g_2\}\to A^*$ such that for any relator $r$
we have $f(r)=0$.

This is done as follows:
\begin{lstlisting}
#Ideal defining A* 
Idd:=[ NullMat(n,n,GF(q)), IdentityMat(n,GF(q)) ];

IsZeroAst:=function(A)
 return A in Idd;
end;

# Filter on relators which contain only one generator 
ARing1:=Filtered(ARing,x->
    IsZeroAst(ApplyRed(Gens,[x,0],TForm[1])));;
ARing2:=Filtered(ARing,x->
    IsZeroAst(ApplyRed(Gens,[0,x],TForm[2])));;

ValuesToConsider:=Cartesian(ARing1,ARing2);;

# Filter on remaining relators
for i in [3..Size(TForm)] do
    ValuesToConsider:=Filtered(ValuesToConsider,
              x->IsZeroAst(ApplyRed(Gens,x,TForm[i])));;
end;

# Group values
ExpandValues:=function(X)
  return Set(Cartesian(Idd,Idd),t->X+t);
end;

MyCocycles:=Set(ValuesToConsider,u->ExpandValues(u));
MyCoboundaries:=Set(ARing,a->
        ExpandValues( [f1^-1*a*f1-a, f2^-1*a*f2-a]));;

\end{lstlisting}
After this exhaustive computer search {\tt ValuesToConsider} contains exactly $4$ elements for any
cocycle. The final step identifies cocycles in {\tt MyCocycles} up to the equivalence relation defining $A^*$.

Keeping in mind that every element  $a\in A^*$ defines a different $1$-coboundary
by the map $f_a(g)=g^{-1}ag-a$, we also initialize the list {\tt MyCoboundaries} with the representatives
of the $1$-coboundary.
We can now see directly that
\[ Z^1(\SL(4,\FF_2),A^*)=B^1(\SL(4,\FF_2),A^*).\]
Consequently,   $H^1(\SL(4,\FF_2),A^*) = \{0\}$. As $\Der(\FF_2)$ is null as well, the isomorphism
$H^1(\SL(4,\FF_2),A^*) \cong \Der(\FF_2)$ trivially holds. 
\bigskip

Authors' addresses:
\vskip.4cm\noindent\nobreak
\begin{minipage}[t]{8cm}
\small{Ilaria Cardinali, Antonio Pasini\\
Dep. Information Engineering and Mathematics \\University of Siena\\
Via Roma 56, I-53100 Siena, Italy\\
ilaria.cardinali@unisi.it \\  antonio.pasini@unisi.it}
\end{minipage}
\hfill
\begin{minipage}[t]{6cm}
\small{Luca Giuzzi\\
D.I.C.A.T.A.M. \\
University of Brescia\\
Via Branze 43, I-25123 Brescia, Italy \\
luca.giuzzi@unibs.it}
\end{minipage}

\end{document}